\numberwithin{equation}{section}
\renewcommand{\L}{\mathcal{L}}
\newcommand{\RR}{\mathbb{R}}
\newcommand{\NN}{\mathbb{N}}
\newcommand{\pre}[2]{{}^{#1} #2}
\newcommand{\set}[2]{\{ #1 \mid #2 \}}
\newcommand{\dom}{\operatorname{dom}}
\newcommand{\embeds}{\sqsubseteq}
\newcommand{\analytic}{\boldsymbol{\Sigma}_1^1}
\newcommand{\Aut}[1]{\mathrm{Aut}(#1)}
\newcommand{\Stab}[1]{\mathrm{Stab}(#1)}
\newcommand{\Subg}[1]{\mathrm{Subg}(#1)}
\newcommand{\SG}[1]{\mathrm{SG}(#1)}
\newcommand{\Homeo}[1]{\mathrm{Homeo}(#1)}
\newcommand{\Isom}[1]{\mathrm{Isom}(#1)}
\newenvironment{enumerate-(a)}{\begin{enumerate}[label={\upshape (\alph*)}, leftmargin=2pc]}{\end{enumerate}}
\newenvironment{enumerate-(a)-r}{\begin{enumerate}[label={\upshape (\alph*)}, leftmargin=2pc,resume]}{\end{enumerate}}
\newenvironment{enumerate-(a)-5}{\begin{enumerate}[label={\upshape (\alph*)}, leftmargin=2pc,start=5]}{\end{enumerate}}
\newenvironment{enumerate-(A)}{\begin{enumerate}[label={\upshape (\Alph*)}, leftmargin=2pc]}{\end{enumerate}}
\newenvironment{enumerate-(A)-r}{\begin{enumerate}[label={\upshape (\Alph*)}, leftmargin=2pc,resume]}{\end{enumerate}}
\newenvironment{enumerate-(i)}{\begin{enumerate}[label={\upshape (\roman*)}, leftmargin=2pc]}{\end{enumerate}}
\newenvironment{enumerate-(i)-r}{\begin{enumerate}[label={\upshape (\roman*)}, leftmargin=2pc,resume]}{\end{enumerate}}
\newenvironment{enumerate-(I)}{\begin{enumerate}[label={\upshape (\Roman*)}, leftmargin=2pc]}{\end{enumerate}}
\newenvironment{enumerate-(I)-r}{\begin{enumerate}[label={\upshape (\Roman*)}, leftmargin=2pc,resume]}{\end{enumerate}}
\newenvironment{enumerate-(1)}{\begin{enumerate}[label={\upshape (\arabic*)}, leftmargin=2pc]}{\end{enumerate}}
\newenvironment{enumerate-(1)-r}{\begin{enumerate}[label={\upshape (\arabic*)}, leftmargin=2pc,resume]}{\end{enumerate}}
\newenvironment{itemizenew}{\begin{itemize}[leftmargin=2pc]}{\end{itemize}}
\newtheorem{theorem}{Theorem}[section]
\newtheorem{lemma}[theorem]{Lemma}
\newtheorem{corollary}[theorem]{Corollary}
\newtheorem{proposition}[theorem]{Proposition}
\newtheorem{claim}{Claim}[theorem]
\theoremstyle{definition}
\newtheorem{definition}[theorem]{Definition}
\theoremstyle{remark}
\newtheorem{remark}[theorem]{Remark}
\begin{document}

\title[Universality of group embeddability]{Universality of group embeddability}
\date{\today}
\author[F.~Calderoni]{Filippo Calderoni}
\author[L.~Motto Ros]{Luca Motto Ros}

\address{Dipartimento di matematica \guillemotleft{Giuseppe Peano}\guillemotright, Universit\`a di Torino, Via Carlo Alberto 10, 10121 Torino --- Italy}
\email{filippo.calderoni@unito.it}

\address{Dipartimento di matematica \guillemotleft{Giuseppe Peano}\guillemotright, Universit\`a di Torino, Via Carlo Alberto 10, 10121 Torino --- Italy}
\email{luca.mottoros@unito.it}

 \subjclass[2010]{Primary: 03E15}
 \keywords{Borel reducibility; countable groups; Polish groups; separable metric groups; group embeddability}
\thanks{We thank Rapha\" el Carroy for many useful comments. The first author also thanks Gabriel Debs, Dominique Lecomte, 
Fran\c cois Lemaitre and Alain Louveau for their comments and their keen interest in this work at the Descriptive Set Theory working group in Paris. 
The second author was supported for this research by the Young
Researchers Program ``Rita Levi Montalcini'' 2012 through the project ``New
advances in Descriptive Set Theory''. The paper was completed during the ESI workshop ``Current trends in Descriptive Set Theory'' 2016 in Vienna: the authors would like to deeply thank the Erwin Schr\"oder International Institute for Mathematics and Physics (ESI) for their support in that occasion.}

\begin{abstract} 
Working in the framework of Borel reducibility, we study various notions of embeddability between groups. We prove that the embeddability between countable groups, the topological embeddability between (discrete) Polish groups, and the isometric embeddability between separable groups with a bounded bi-invariant complete metric are all invariantly universal analytic quasi-orders. This strengthens some results from \cite{Wil14} and~\cite{FerLouRos}.
\end{abstract}

\maketitle

\section{Introduction}

We work in the framework of analytic (i.e.\ \( \analytic \)) equivalence relations, that is we consider
pairs $(X,E)$ consisting of a standard Borel space $X$ together with an equivalence relation  $E$ on it which is analytic as a subset of $X^2$ (we refer the reader to the beginning of Section~\ref{sec:preliminaries} for the definitions of standard Borel spaces, analytic sets, and other preliminary notions). Since the seminal papers~\cite{FriSta,HarKecLou}, analytic equivalence relations are usually compared via the quasi-order of \emph{Borel reducibility}: if $(X,E)$ and $(Y,F)$ are \( \analytic \) equivalence relations, we say that $E$ is \emph{Borel reducible} to $F$  ($E\leq_B F$ in symbols) if there is
a Borel function $f \colon X\to Y$ such that  for every $x,x'\in X$
\[
{x\mathrel{E}x'} \iff {{f(x)}\mathrel{F}{f(x')}}.
\]
We also write \( E \sim_B F \) when the two \( \analytic \) equivalence relations \( E \) and \( F \) are \emph{Borel bi-reducible}, i.e.\ \( E \leq_B F \) and \( F \leq_B E \).

Over the last two decades Borel reducibility played a prominent role in the field of descriptive set theory: on the one hand it provides an efficient tool for measuring the complexity of various classification problems arising from different areas of mathematics (see for instance~\cite{Gao2003,FerLouRos,Sab}); on the other hand, the abstract analysis of the structure of \( \analytic \) equivalence relations under \( \leq_B \) turned out to be extremely challenging, yielding to a great variety of results and sophisticated techniques involving e.g.\ measure theory, ergodic theory, and so on. 

Harrington was the first to point out the existence of $\leq_B$-maxima%
\footnote{Clearly, by definition of maximum all these elements are Borel bi-reducible to each other.}
in the class of all analytic equivalence relations: such elements are called \emph{complete}, 
and can be regarded as the most complicated analytic equivalence relations --- any assignment of complete invariants for them can 
be turned into an assignment of complete invariants for any other \( \analytic \) equivalence relation.
Harrington's example came from an abstract construction specifically designed to obtain a complete \( \analytic \) equivalence relation, but some years later Louveau and Rosendal isolated in~\cite{LouRos} many 
natural examples 
coming from various areas of mathematics. The approach undertaken in~\cite{LouRos} consists in studying analytic \emph{quasi-orders}, i.e. reflexive 
and transitive binary relations (rather than equivalence relations). One can extend the notion 
of Borel reducibility to this broader context \emph{verbatim}: given two \( \analytic \) quasi-orders \((X,P)\) and \((Y,Q)\), 
we say that $P$ is \emph{Borel reducible} to $Q$  ($P\leq_B Q$ in symbols) if there is
a Borel function $f \colon X\to Y$ such that  for every $x,x'\in X$
\[
{x\mathrel{P}x'} \iff {{f(x)}\mathrel{Q}{f(x')}}.
\]
In the mentioned paper it is shown that, up to a natural coding, the embeddability relation 
between countable graphs is \( \leq_B \)-above (i.e.\ \emph{complete} for) all analytic quasi-orders, and this easily implies that the bi-embeddability relation between countable graphs is a complete \( \analytic \) 
equivalence relation. (Indeed, it can be shown that every complete \( \analytic \) equivalence relation can be construed as the symmetrization of a \( \analytic \) quasi-order which is complete in its class, so the technique of 
Louveau and Rosendal is as general as possible.)

A strengthening of completeness for \( \analytic \) quasi-orders was isolated in~\cite{FriMot}, where it is shown that for any \( \analytic \) quasi-order \( R \) there is an \( \L_{\omega_1 \omega} \)-sentence \( \upvarphi \) (all of whose models are graphs) such that \( R \) is Borel bi-reducible to embeddability between countable models of \( \upvarphi \). This means that not only the embeddability relation on countable graphs is as complicated as possible, but also that it has a stronger universality property: it contains in a natural way (i.e.\ as an
 \( \L_{\omega_1 \omega} \)-elementary
 subclass\footnote{An \( \L_{\omega_1 \omega} \)-elementary class (of countable structures) is the collection of countable models of a given sentence in the infinitary logic \( \L_{\omega_1 \omega} \).}) a faithful copy of every \( \analytic \) quasi-order. 
The above result naturally leads to the following definition of invariant universality. (By the Lopez-Escobar theorem~\cite[Theorem 16.8]{Kec}, if \( X \) is a space of countable structures and \( E \) is the isomorphism relation on it, than any \( B \) as in Definition~\ref{Definition : invariantly universal} is an \( \L_{\omega_1 \omega} \)-elementary class.)

 \begin{definition}[{\cite{CamMarMot}}]\label{Definition : invariantly universal}
Let $S$ be a $\analytic$ quasi-order on some standard Borel space $X$ and let $E$ be a $\analytic$ equivalence subrelation of $S$. We say that $(S,E)$
is \emph{invariantly universal} (or $S$ is invariantly universal with respect to $E$) if for every $\analytic$ quasi-order $R$ there is a Borel subset $B\subseteq X$ which is invariant with respect to $E$ and
such that the restriction of $S$ to $B$ is Borel bi-reducible with $R$.
 \end{definition}
 
 It immediately follows from the definition that if \( (S,E) \) is invariantly universal, then \( S \) is a complete \( \analytic \) quasi-order.
 Of course, to avoid trivial pathologies one should consider only ``meaningful'' pairs: for example, when \( S \) is induced by some notion of morphism between certain objects, it makes sense to pair it with the equivalence relation induced by the associated notion of isomorphism. In particular, when \( S \) is some kind of embeddability relation, then \( E \) is usually taken to be the induced isomorphism relation: when this is the case, we drop the reference to \( E \) and simply say that \( S \) (instead of the pair \( (S,E) \)) is invariantly universal.

The notion of invariant universality has been extensively studied in the papers~\cite{CamMarMot,CamMarMot16}. Perhaps against intuition, 
it turned out to be a quite widespread phenomenon: essentially, all the \( \analytic \) quasi-orders that were known to 
be complete turned out to be invariantly universal when paired with the naturally associated equivalence relation (including, for example, embeddability 
between graphs, topological embeddability between compacta, isometric embeddability between metric spaces, and linear isometric embeddability between Banach spaces). 
Despite the great diversity of the examples considered, all these invariant universality results were obtained via a unique technique, which can be applied only when the equivalence relation \( E \) is Borel reducible to an orbit equivalence relation (see Theorem~\ref{Theorem : CMMR13} below). In all the above mentioned situations, this extra condition was granted for free, but it can become a serious obstacle when \( E \) is e.g.\ a complete \( \analytic \) equivalence relation, as it is the case when considering topological groups. Indeed, the (topological) embeddability relation between Polish groups is complete by~\cite[Corollary 34]{FerLouRos}, but the same proof also shows that the (topological) isomorphism relation between Polish groups is a complete \( \analytic \) equivalence relation. Thus, on the one hand the completeness of embeddability invites to check whether it is indeed invariantly universal (when paired with the isomorphism relation), on the other hand the completeness of the isomorphism relation seems to forbid the use of the only known technique for proving invariant universality, a situation we are facing for the very first time.

In this paper, we will confirm the general trend uncovered in~\cite{CamMarMot,CamMarMot16} (``all complete \( \analytic \) quasi-orders are indeed invariantly universal'') by showing that also the embeddability relation 
between Polish groups is  invariantly universal. To overcome the technical difficulty explained above, we use a construction due to J.\ Williams who showed in~\cite{Wil14} (using small cancellation theory techniques) that the 
embeddability relation between countable graphs Borel reduces to the embeddability relation between countable groups, so that the latter is complete for \( \analytic \) quasi-orders. After introducing some preliminary notions and 
results in Section~\ref{sec:preliminaries}, in Section~\ref{sec:ctblegroups} we strengthen Williams' result by showing that the embeddability relation between countable groups is in fact invariantly universal 
(Theorem~\ref{Theorem : Gp uni}), a result which may be of independent interest. In Section~\ref{sec:topgroups} we 
then show how to adapt this construction to deal with Polish groups (Subsection~\ref{sec:polishgroups}) and separable groups endowed with a 
complete bi-invariant metric (Subsection~\ref{sec:metricgroups}): in all these cases, we obtain that the relevant embeddability relation is invariantly universal 
(Theorems~\ref{Theorem : PGp uni},~\ref{Theorem : embeds_i invariantly universal}, and~\ref{thm:continuouslogic}).

\section{Preliminaries} \label{sec:preliminaries}
A topological space X is \emph{Polish} if it is separable and completely metrizable.
If \(A\) is a countable set, the spaces \(2^{A}\) and \(\NN^{A}\) viewed as the product of infinitely many copies
of \(2\) and \(\NN\) with the discrete topology, respectively, are Polish.
In this paper we mainly deal with spaces of the form \(2^{\NN^{n}}\), for some integer \(n\), or \( 2^G \), where \( G \) is a countable group.
A \emph{Polish group} is a topological group whose topology is Polish. 
A well known example is \(S_{\infty}\), the group of all bijections from \(\NN\) to \(\NN\),
which is a \(G_{\delta}\) subset of \(\NN^{\NN}\) and a Polish group with the relative topology.

A \emph{standard Borel space} is a pair $(X,\mathcal B)$ such that $\mathcal B$ is the $\sigma$-algebra of Borel subsets of $X$ with respect to some Polish topology on $X$. 
Given a Polish space $X$, the space \( F(X) \) of closed subsets of \( X \) is a standard Borel space when equipped with the Effros Borel structure (see~\cite[Section 12.C]{Kec}). 
If $\mathbf{G}$ is a Polish group, then the space $\Subg{\mathbf{G}}$ of closed subgroups of $\mathbf{G}$ is a Borel subset of \( F(\mathbf{G}) \), and thus it is standard Borel as well.

A subset of a standard Borel space is  $\analytic$, or \emph{analytic},  if it is the image of a standard Borel space via a Borel function.
In particular, a binary relation defined on a standard Borel space $X$ is \emph{analytic} if it is a \( \analytic \) subset of $X\times X$. A \textit{co-analytic} set is a subset of a standard Borel space whose complement is analytic.

A \emph{quasi-order} is a reflexive and transitive binary relation. Any quasi-order $Q$ on a set $X$ canonically induces an equivalence relation on $X$, which is denoted by $E_Q$, defined by setting $x \mathrel{E_Q} y$ if and only if $x \mathrel{Q} y$ and $y \mathrel{Q} x$  (for all \( x,y \in X \)). If \( Q \) is analytic, then so is \( E_Q \).

If a Polish group $\boldsymbol{G}$ acts on a standard Borel space $X$ in a Borel way, then we say that $X$ is a \emph{standard Borel $\boldsymbol{G}$-space} and
we denote by $E_{\boldsymbol{G}}^X$ the \emph{orbit equivalence relation} induced by the action of $\boldsymbol{G}$ on $X$. Such equivalence relation is analytic. The \textit{stabilizer} of a point \( x\in X \) is the subgroup
\[
\Stab{x}\coloneqq\set{g\in \boldsymbol{G}}{g\cdot x=x},
\]
where \( g\cdot x \) denotes the value of the action on the pair $(g,x)$.

Given two binary relations $R$ and $R'$ on  standard Borel spaces $X$ and $Y$, respectively, we say that $R$ \emph{Borel reduces} (or is \emph{Borel reducible}) to $R'$ (in symbols, \( R \leq_B R' \)) if and only if there is a Borel function $f \colon X\to Y$ such that for every $x,y\in X$
\[
x \mathrel{R} y \iff f(x)\mathrel{R'}f(y).
\]
Such an \( f \) is called (\emph{Borel}) \emph{reduction} (of \( R \) to \( R' \)). The relations \( R \) and \( R' \) are \emph{Borel bi-reducible} (in symbols, \( R \sim_B R' \)) if \( R \leq_B R' \) and \( R' \leq_B R \).

Louveau and Rosendal proved in \cite{LouRos} that among all $\analytic$ quasi-orders there are $\leq_B$-maximum elements: such quasi-orders are (by definition of maximum) Borel bi-reducible to each other, and are called \emph{complete $\analytic$ quasi-orders}. In~\cite{LouRos} the authors proved that several $\analytic$ quasi-orders which naturally  occur in mathematics are indeed complete: among those, the first prominent example is the embeddability relation between countable graphs that we briefly describe below. Let $X_{Gr}$ be the space of graphs on \( \NN \). By identifying
each graph with the characteristic function of its edge relation, $X_{Gr}$ can be construed as a closed subset of $2^{\mathbb{N}^2}$, and thus it is a Polish space. Given $S,T\in X_{Gr}$, set 
$S\embeds_{Gr}T$ if and only if $S$ embeds into $T$, i.e.\ if and only if
there is an injective function $f \colon \NN \to \NN$ such that $m$ and \( n \) are adjacent in \( S \) if and only if \( f(m) \) and \( f(n) \) are adjacent in \( T \) (for every $m,n\in\NN$).

\begin{theorem}[{\cite[Theorem 3.1]{LouRos}}]\label{Theorem : LouRos}
The relation $\embeds_{Gr}$ on \( X_{Gr} \) of  embeddability between countable graphs is a complete $\analytic$ quasi-order.
\end{theorem}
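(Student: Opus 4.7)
The plan is to show that every $\analytic$ quasi-order Borel reduces to $\embeds_{Gr}$. Fix a $\analytic$ quasi-order $R$ on a standard Borel space, which after a Borel isomorphism we may take to be $2^\NN$. By the standard tree representation of $\analytic$ sets, there is a tree $U$ on $2\times 2\times \NN$ such that $x\mathrel{R}y$ holds if and only if the section $U_{x,y}=\set{s\in\NN^{<\NN}}{(x\upharpoonright \lh(s),y\upharpoonright \lh(s),s)\in U}$ has an infinite branch.

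The first step is to reduce $R$ to a combinatorial embeddability quasi-order on countable rooted trees. For each $x\in 2^\NN$ I would associate a rooted countable tree $\mathscr{T}_x$ whose nodes encode, in a layered fashion, the data required for potential $R$-witnesses $(y,\alpha)$; the design goal is that any rooted tree embedding $\mathscr{T}_x\hookrightarrow\mathscr{T}_y$ decodes to an infinite branch of $U_{x,y}$, hence to $x\mathrel{R}y$. The reflexivity and transitivity of $R$ are essential: reflexivity guarantees $\mathscr{T}_x\hookrightarrow\mathscr{T}_x$ via the identity, while transitivity enforces compatibility under composition of tree embeddings. The assignment $x\mapsto\mathscr{T}_x$ will be Borel by inspection of the construction.

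The second step codes each countable rooted tree $\mathscr{T}$ as a countable graph $G_\mathscr{T}$ via a rigid gadget construction: to each vertex $v$ at depth $n$ attach a disjoint cycle of length $p_n$ (the $n$-th prime), retaining the parent--child edges of $\mathscr{T}$ as graph edges. Unique factorization of cycle lengths forces every graph embedding $G_\mathscr{T}\hookrightarrow G_{\mathscr{T}'}$ to preserve depth and parent--child relations, so it restricts to a rooted tree embedding $\mathscr{T}\hookrightarrow\mathscr{T}'$; conversely every rooted tree embedding extends canonically to a graph embedding. Composing with the first reduction yields a Borel map $x\mapsto G_x$ with $x\mathrel{R}y\iff G_x\embeds_{Gr}G_y$, as desired.

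The main obstacle is Step 1: engineering $\mathscr{T}_x$ so that tree embeddings correspond \emph{exactly} to $R$-witnesses. The forward direction (an $R$-witness produces a tree embedding) is relatively direct, but the converse (decoding an abstract tree embedding into an $R$-witness) requires additional rigidity --- typically distinguishable markers at each level of $\mathscr{T}_x$, together with explicit use of the transitivity of $R$ to absorb non-identity embeddings. Once this combinatorial reduction is in place, the graph-coding step is essentially routine.
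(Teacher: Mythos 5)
The paper does not actually prove this statement --- it is quoted from Louveau--Rosendal --- and your outline in fact mirrors the strategy of their cited proof: reduce an arbitrary analytic quasi-order to a combinatorial embeddability-type relation on countable trees, then code trees as graphs via rigidity gadgets. The difficulty is that essentially all of the mathematical content sits in your Step 1, and Step 1 is not carried out: you state the ``design goal'' that rooted tree embeddings $\mathscr{T}_x\hookrightarrow\mathscr{T}_y$ should correspond exactly to infinite branches of $U_{x,y}$, and you yourself flag this as ``the main obstacle,'' but no construction of $\mathscr{T}_x$ is given and no argument is offered for either implication. Two specific points are being elided. First, $\mathscr{T}_x$ depends only on $x$, so it cannot literally encode ``the data required for potential $R$-witnesses $(y,\alpha)$'' for a particular $y$; what Louveau and Rosendal actually do is assign to each $x$ a normal tree $S_x$ on $2\times\NN$ whose nodes $(u,s)$ range over \emph{all} finite binary strings $u$ (approximations to an arbitrary $y$) paired with witness strings $s$, and define $S_x\leq_{\max}S_y$ via a single monotone level-preserving map on $\NN^{<\NN}$ required to work \emph{uniformly in} $u$; this universal quantification is what allows one to specialize to the actual $y$ and recover a branch of $U_{x,y}$, and nothing in your ``layered'' description reproduces it. Second, your claim that ``transitivity enforces compatibility under composition of tree embeddings'' has the logic reversed: tree embeddings compose automatically, so an arbitrary embedding $\mathscr{T}_x\hookrightarrow\mathscr{T}_y$ may be a composite that decodes to no single branch of $U_{x,y}$; the remedy in the original argument is to saturate the representing tree under a composition operation \emph{before} building $S_x$, using the reflexivity and transitivity of $R$ to check that the saturated tree still represents $R$. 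Without these two devices the backward implication (embedding implies $x\mathrel{R}y$) has no proof.

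Your Step 2 is closer to routine but also needs repair as stated: attaching an induced cycle of length $p_n$ at a depth-$n$ vertex $v$ does not let an embedding distinguish $v$ from the other vertices lying on that same cycle, so preservation of depth and of the parent--child relation does not yet follow; one needs asymmetric gadgets (for instance cycles or chains attached by a stem, as in the forks used by Louveau and Rosendal) so that the tree vertices and the levels are definable inside the graph. That is a fixable detail; the gap in Step 1 is not.
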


In \cite{FriMot} and \cite{CamMarMot}, the authors modified the proof of Theorem~\ref{Theorem : LouRos}
in order to find a Borel $\mathbb{G}\subseteq X_{Gr}$ with the following properties:
\begin{enumerate-(i)}
\item each element of $\mathbb{G}$ is a combinatorial tree (i.e.\ a connected acyclic graph);
\item the equality and isomorphism relations restricted to $\mathbb{G}$, denoted respectively by $=_{\mathbb{G}}$ and $\cong_{\mathbb{G}}$, coincide;
\item each graph in $\mathbb{G}$ is rigid, i.e.\ it has no nontrivial automorphism;
\item $\embeds_{\mathbb{G}}$, the restriction of $\embeds_{Gr}$ to $\mathbb{G}$, is a complete $\analytic$ quasi-orders.
\end{enumerate-(i)}

The standard Borel space $\mathbb{G}$ is used to test whether a pair $(Q,E)$ satisfying the conditions of Definition \ref{Definition : invariantly universal} is invariantly universal. In fact, the following result gives sufficient conditions to ensure the invariant universality of a pair.

\begin{theorem}[{\cite[Theorem 4.2]{CamMarMot}}]\label{Theorem : CMMR13} 
Suppose that $Q$ is a $\analytic$ quasi-order on a standard Borel space $X$ and let
$E\subseteq Q$ be a $\analytic$ equivalence relation on \( X \).
Then, $(Q,E)$ is invariantly universal
provided that the following conditions hold:
\begin{enumerate-(i)}\label{enumerate : TheoremCMMR13}
 \item \label{condition : 1} there is a Borel reduction $f \colon \mathbb{G}\rightarrow X$ of $\embeds_\mathbb{G}$ to $Q$;
\item \label{condition : 2} $f$ is also a Borel reduction of ${=_\mathbb{G}}$ (equivalently, of \( \cong_{\mathbb{G}} \)) to ${E}$;
\item \label{condition : 3} there are a co-analytic \( E \)-invariant \( \mathrm{rng}(f) \subseteq Z \subseteq X \), a Polish group \( \boldsymbol H\), a standard Borel $\boldsymbol{H}$-space $Y$,  and a Borel reduction $h \colon Z \to Y$
of $E \restriction Z$ to $E_{\boldsymbol{H}}^Y$ such that the map
\[
 \mathbb{G}\to\Subg {\boldsymbol H}, \qquad 	T\mapsto \Stab{h(f(T))}
\] 
 is Borel.
\end{enumerate-(i)}
\end{theorem}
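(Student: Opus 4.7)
Given an arbitrary analytic quasi-order $R$, the plan is to transport a Borel invariantly universal witness from $\mathbb{G}$ to $X$. Starting from the (previously established) invariant universality of $\embeds_{\mathbb{G}}$, I first obtain a Borel $\cong_{\mathbb{G}}$-invariant $C \subseteq \mathbb{G}$ with $\embeds_{\mathbb{G}} \restriction C \sim_B R$. Condition (i) makes $f \restriction C$ a Borel reduction of $\embeds_{\mathbb{G}} \restriction C$ into $Q \restriction f(C)$, so $R \leq_B Q \restriction f(C)$. The remaining goal is to enlarge $f(C)$ to a Borel $E$-invariant set $B$ without introducing new $E$-classes; a Borel selector from $B$ back into $C$ (available because condition (ii) and the rigidity of elements of $\mathbb{G}$ force the correspondence between $E$-classes of $f(C)$ and $\cong_{\mathbb{G}}$-classes of $C$ to be bijective) will then yield the reverse reduction $Q \restriction B \leq_B \embeds_{\mathbb{G}} \restriction C \sim_B R$.

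The main obstacle is that $f(C)$ is only analytic, so its $E$-saturation is a priori only analytic. To overcome this I would use condition (iii) to move the problem over to the standard Borel $\boldsymbol{H}$-space $Y$ via $h$: the desired $B$ will be the $h$-preimage (inside $Z$) of a Borel $E_{\boldsymbol{H}}^Y$-invariant set $A \supseteq h(f(C))$ that introduces no new orbits. The natural candidate is $A \coloneqq [h(f(C))]_{E_{\boldsymbol{H}}^Y}$, which is the image of the Borel map $\Phi \colon C \times \boldsymbol{H} \to Y$, $\Phi(T, g) = g \cdot h(f(T))$. By condition (ii) and rigidity, $\Phi(T,g) = \Phi(T',g')$ forces $T = T'$ and $g^{-1}g' \in \Stab{h(f(T))}$, so the only failure of injectivity is along left cosets of these stabilizers. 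Condition (iii) says that $T \mapsto \Stab{h(f(T))}$ is Borel, and closed subgroups of Polish groups admit Borel transversals for their coset spaces; uniformizing in $T$, one can restrict $\Phi$ to a Borel transversal obtaining a Borel injection with image $A$. Lusin--Suslin then gives that $A$ is Borel, and pulling back via $h$ inside the co-analytic $E$-invariant $Z$ yields the Borel $E$-invariant set $B$.

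I expect the Borel uniformization of coset transversals parameterized by a Borel assignment $T \mapsto \Stab{h(f(T))}$ of closed subgroups to be the main technical hurdle, and it is precisely the point at which the three hypotheses work in concert: rigidity of $\mathbb{G}$ together with condition (ii) controls the $T$-coordinate (preventing two distinct trees in $C$ from being sent to the same orbit), while condition (iii) controls the $\boldsymbol{H}$-coordinate (allowing Borel selection inside $\boldsymbol{H}$ modulo the stabilizer). Once $A$, and hence $B$, is shown to be Borel, the bi-reducibility $Q \restriction B \sim_B R$ follows by combining the forward reduction through $f \restriction C$ with the backward Borel selector, thereby establishing invariant universality of $(Q,E)$.
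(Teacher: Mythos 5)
The paper does not prove this theorem: it is quoted verbatim (modulo one relaxation) from \cite[Theorem 4.2]{CamMarMot}, so there is no in-paper proof to compare against. That said, your outline is essentially the argument given in \cite{CamMarMot}: transport a Borel invariantly universal witness $C$ from $\mathbb{G}$, show the $E$-saturation of $f(C)$ is Borel by pushing it through $h$ into the orbit space $Y$ and writing $[h(f(C))]_{E_{\boldsymbol{H}}^Y}$ as the injective Borel image of $C$ fibered over Borel coset transversals of the stabilizers (this is exactly where the Borelness of $T\mapsto\Stab{h(f(T))}$ is used, together with the uniform Borel selection of transversals for closed subgroups), then apply Lusin--Suslin and define the backward reduction via the selector $x\mapsto$ the unique $T\in C$ with $x\mathrel{E}f(T)$, which is Borel because its graph is analytic. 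The one point you gloss over is the paper's actual modification of the cited statement: $Z$ is only required to be co-analytic, so the pullback $h^{-1}(A)$ is a priori only co-analytic; the remark following the theorem fixes this by first shrinking $Z$ to a Borel $E$-invariant $Z'\supseteq\mathrm{rng}(f)$ via the separation theorem for analytic $E$-invariant sets \cite[Lemma 5.4.6]{Gao}, after which your argument goes through unchanged.
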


Recall that \( Z\subseteq X\) is \emph{\(E\)-invariant} if it is a union of \( E \)-classes. 
Notice that in the original formulation of Theorem \ref{Theorem : CMMR13}  (cf.~\cite[Theorem 4.2]{CamMarMot}) the set \(Z\) is required to be Borel, which seems a stronger condition. However, our statement is equivalent to the original one because if \(  Z \) is co-analytic
and \( E \)-invariant, then by the separation theorem for analytic  \( E\)-invariant sets (see \cite[Lemma 5.4.6]{Gao}) there is an \( E \)-invariant Borel \( \mathrm{rng}(f)\subseteq Z' \subseteq Z\) which satisfies condition~\ref{condition : 3}.

If \( Y \) is an \( S_\infty \)-space of countable structures (with \( S_\infty \) acting on \( Y \) with the usual continuous logic action, so that the induced equivalence relation is the isomorphism on \( Y \)), then the stabilizer \( \Stab{y} \) of any \( y \in Y \) is the group \( \Aut{y} \) of automorphisms of \( y \). In many applications of Theorem~\ref{Theorem : CMMR13}, the situation is considerably simplified by the fact that \( X \) itself is a space of countable structures and \( E \) is the isomorphism relation: in this case, one could verify condition \ref{condition : 3} of Theorem~\ref{enumerate : TheoremCMMR13} setting $X=Z=Y$ and $h$ equal to the identity map, so that it suffices to check the Borelness of the map
\[
 \mathbb{G} \to \Subg {S_\infty}, \qquad T \mapsto \Aut{f(T)}.
\]

\section{Embeddability between countable groups} \label{sec:ctblegroups}

Let $X_{Gp}$ be the set of groups whose underlying set is $\mathbb{N}$. 
Every such group can be identified with the (characteristic function of the) graph of its operation, hence
$X_{Gp}$ can be viewed as a $G_\delta$ subset of $2^{\mathbb{N}^3}$, and thus it is a Polish space. Let $\embeds_{Gp}$ be the $\analytic$ quasi-order of embeddability on $X_{Gp}$.  Jay Williams showed in~\cite[Theorem~5.1]{Wil14} that \( {\embeds_{Gr}} \leq_B { \embeds_{Gp}} \), which combined with Theorem~\ref{Theorem : LouRos} yields the next result.

\begin{theorem}[\cite{Wil14}]\label{Theorem : Wil14}
The relation $\embeds_{Gp}$ is a complete $\analytic$ quasi-order.
\end{theorem}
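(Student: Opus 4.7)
The plan is to construct a Borel reduction $\Gamma \mapsto G_\Gamma$ witnessing ${\embeds_{Gr}} \leq_B {\embeds_{Gp}}$; once this is done, the statement follows immediately by composition with Theorem~\ref{Theorem : LouRos}.

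The natural strategy is to encode each graph $\Gamma$ on $\NN$ into a group presentation whose generators correspond to the vertices of $\Gamma$ and whose relators codify the edge relation. Concretely, I would introduce one generator $a_n$ for each vertex $n \in \NN$ (possibly together with a fixed list of auxiliary generators) and, for each pair $(m,n)$, a relator $w_{m,n}$ whose form depends in a Borel way on whether $\{m,n\}$ is an edge of $\Gamma$. In order to avoid unintended identifications and to rigidify the construction, I would arrange the $w_{m,n}$ so that the whole relator set satisfies a strong small cancellation condition such as $C'(1/6)$. This has two classical pay-offs: Dehn's algorithm yields an effective solution to the word problem, from which one extracts a Borel coding $\Phi \colon X_{Gr} \to X_{Gp}$ realizing $G_\Gamma$ on the underlying set $\NN$; and each generator $a_n$ represents a distinct, nontrivial element of $G_\Gamma$.

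The forward direction is routine: any graph embedding $f \colon \Gamma \embeds_{Gr} \Gamma'$ should extend to the homomorphism $a_n \mapsto a_{f(n)}$, which respects the relators by construction, and which the small cancellation hypothesis forces to be injective.

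The main obstacle will be the reverse direction: starting from an arbitrary injective group homomorphism $\phi \colon G_\Gamma \hookrightarrow G_{\Gamma'}$, one must recover a graph embedding $\NN \to \NN$. The idea is to single out the vertex generators $a_n$ by some group-theoretic invariant that is preserved under embeddings, for instance as the unique generators (up to conjugation and inversion) of certain distinguished maximal cyclic subgroups, or as the unique elements of some prescribed finite order that minimize length in their conjugacy class. Small cancellation theory will be the key technical tool to establish such a rigidity statement, and proving it is the delicate heart of the argument. Once the vertex generators have been recognized, the specific shape of the relators $w_{m,n}$ forces the resulting vertex map to preserve the edge relation, delivering the desired graph embedding.
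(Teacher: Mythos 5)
Your proposal follows essentially the same route as the paper, which obtains this theorem by invoking Williams' construction: one generator \( v_i \) of order \( 7 \) per vertex, relators \( (v_iv_j)^{11} \) for edges and \( (v_iv_j)^{13} \) for non-edges, symmetrized so as to satisfy \( C'\left(\frac{1}{6}\right) \), with Greendlinger's torsion theorem (Theorem~\ref{Theorem : Greendlinger}) supplying exactly the rigidity statement you defer to --- every finite-order element is conjugate to a power of a generator occurring in a power relator, so the three coprime orders \( 7,11,13 \) let one recognize vertices and read off the edge relation. The only point worth flagging in the ``delicate heart'' you postpone is that recovering the edge relation also requires showing the conjugators for \( \phi(v_i) \) and \( \phi(v_j) \) can be taken equal (so that the order of \( \phi(v_iv_j) \) is computable); this coherence step is carried out in Claim~\ref{claim:fundamental} of Lemma~\ref{Lemma : auto} for the automorphism case and in Williams' original proof for embeddings.
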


The Borel reduction used in~\cite{Wil14} maps each graph $T\in X_{Gr}$
to the group \( G_T = \langle v_0, v_1, \dotsc \mid R_T \rangle \) generated by the vertices of $T$, which we denote by $\set{v_i}{i\in \mathbb{N}}$ to avoid confusion, and the following set of relators $R_T$ encoding 
the edges of $T$: for every $T\in X_{Gr}$, $R_T$ is the smallest subset of the free group on $\set{v_i}{i\in \mathbb{N}}$
which is symmetrized (i.e.\ closed under inverses and cyclic permutations, and such that all its elements are cyclically reduced) and contains the following words (for distinct \(i,j \in \NN \)):
\begin{itemizenew}
 \item $v_i^7$
\item $(v_iv_j)^{11}$, if $(v_i,v_j)\in T$
\item $(v_iv_j)^{13}$, if $(v_i,v_j)\notin T$.
\end{itemizenew}
A \emph{piece} for the group presented by \( \langle V \mid R \rangle\) is a maximal common initial segment of two distinct \( r_1, r_2 \in R\).
It is immediate to check that for every $T\in X_{Gr}$, the set \( R_T \) satisfies the following small cancellation condition:
\begin{equation} \tag{$C'\left(\frac{1}{6}\right)$}\label{eq : sixth}
\text{if $u$ is a piece and $u$ is a subword of some $r \in R$, then $|u| < \frac{1}{6}|r|$}.
\end{equation}
Groups \( \langle V \mid R \rangle \) whose set of relators \( R \) is symmetrized and satisfies the $C'\left(\frac{1}{6}\right)$ condition are called \emph{sixth groups}.

\begin{theorem}[{\cite[Theorem V.10.1]{LynSch}}]\label{Theorem : Greendlinger}
Let $G=\langle V \mid R\rangle$ be a sixth group. If $w$ represents an element of finite order in $G$, then there is some $r\in R$ of the form $r=v^n$ such that $w$ is conjugate to a power of
$v$. Thus, if $w$ is cyclically reduced, then $w$ is a cyclic permutation of some power of $v$ with $v^n\in R$ for some \( n \in \NN \). 
\end{theorem}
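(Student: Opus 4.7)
The plan is to deduce the theorem from the combinatorial theory of van Kampen diagrams for small cancellation presentations, together with a cyclic-symmetry argument that exploits the torsion hypothesis.

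First, I would reduce to the cyclically reduced case by conjugation, so assume \( w \) is cyclically reduced and has finite order \( N > 1 \) in \( G \). Then \( w^N \) represents the identity in \( G \), so there exists a reduced van Kampen diagram \( D \) over the presentation \( \langle V \mid R \rangle \) whose boundary label, read along \( \partial D \) starting from some base vertex, is \( w^N \). Among all such diagrams I would choose one of minimal area; reducedness and minimality ensure that no two adjacent 2-cells cancel along a shared edge.

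Next, I would invoke Greendlinger's lemma for \( C'(1/6) \) presentations, which asserts that any nonempty reduced van Kampen diagram over a sixth presentation contains a boundary 2-cell \( \pi \) such that the intersection \( \partial\pi \cap \partial D \) is a connected arc of length strictly greater than \( |\partial\pi|/2 \). Thus some cyclic conjugate of \( w^N \) contains a subword that accounts for more than half of some relator \( r \in R \). To extract information about \( w \) itself rather than merely about \( w^N \), I would exploit the cyclic symmetry of the boundary: rotation of \( \partial D \) by \( |w| \) preserves the boundary label, so a standard covering/quotient argument lets one pass from \( D \) to an annular (Schupp) diagram \( A \) both of whose boundary components are labeled by \( w \). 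Greendlinger's lemma for annular diagrams then produces a 2-cell in \( A \) whose relator \( r \) shares a subword of length greater than \( |r|/2 \) with the bi-infinite periodic extension \( \dotsb w w w \dotsb \).

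Finally, the \( C'(1/6) \) condition combined with the cyclic reducedness of \( w \) forces a rigid configuration. Any two occurrences of such a long subword of \( r \) inside the periodic word on \( w \) differ by a cyclic shift of \( r \), and a Fine--Wilf style combinatorial lemma together with the one-sixth bound forces \( r \) itself to be periodic with period dividing \( |w| \). Consequently \( r = v^n \) for some cyclically reduced word \( v \) with \( |v| \) dividing \( |w| \), and \( w \) is forced to be a cyclic permutation of a power of that \( v \); undoing the initial conjugation yields the general statement. I expect the main obstacle to be precisely the passage from the mere existence of a long shared subword to the conclusion that a specific relator is a proper power to which \( w \) aligns: one must rule out configurations in which the long subword of \( r \) crosses the periodicity of \( w^N \) with a misaligned shift, and in the presence of torsion these configurations can be subtle. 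This is the step where the full strength of \( C'(1/6) \), as opposed to weaker small cancellation conditions, becomes indispensable.
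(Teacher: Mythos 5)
First, a point of comparison: the paper does not prove this statement at all --- it is quoted verbatim from Lyndon--Schupp (Theorem V.10.1, the Torsion Theorem for \( C'(1/6) \) presentations), so the only meaningful benchmark is the classical proof there. Your outline has the correct endgame (a sufficiently long subword of a relator \( r \) lying along the periodic word \( w^\infty \) in two positions differing by a shift forces, via the piece bound, that \( r \) coincides with a nontrivial cyclic permutation of itself and is therefore a proper power \( v^n \)), but two of your intermediate steps do not work as stated. The main one is the ``covering/quotient argument'' passing from the disc diagram \( D \) for \( w^N \) to an annular diagram with both boundary components labelled \( w \). A minimal-area diagram for \( w^N \) need not admit the order-\( N \) rotational symmetry of its boundary label (minimality does not give uniqueness), and even a genuinely symmetric disc cannot be quotiented to an annulus: the rotation has an interior fixed point, so the quotient is again a disc (with a distinguished cell or vertex), not an annulus. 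Worse, an annular diagram whose two boundary labels are both \( w \) merely witnesses the vacuous fact that \( w \) is conjugate to itself and carries no torsion information, so Greendlinger's lemma for annuli applied to it yields nothing. The two occurrences of a long common subword must instead be produced directly on the boundary of \( D \): the Greendlinger arc \( s \) satisfies \( |s| > |r|/2 \) and lies on the periodic word \( w^N \), so when \( |s| > |w| \) the prefix and suffix of \( s \) of length \( |s| - |w| \) are equal as words and occur in \( r \) at positions differing by \( |w| \); if \( r \) were not equal to that cyclic shift of itself, this common subword would be a piece, contradicting \( C'(1/6) \) once \( |s| - |w| \geq |r|/6 \).

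The second gap is that you never handle the complementary case in which the Greendlinger arc is short relative to \( |w| \) (equivalently, the self-overlap \( |s| - |w| \) is too small to violate the piece bound), and in that regime there are simply no ``two occurrences'' to compare, so your Fine--Wilf step has nothing to act on. In the standard proof this case is disposed of by Dehn's algorithm together with an induction on \( |w| \): if \( |s| \leq |w| \) then \( s \) sits inside a single cyclic conjugate \( w' \) of \( w \), and writing \( r = sc \) with \( |c| < |s| \) one replaces \( s \) by \( c^{-1} \) to obtain a strictly shorter word equal to \( w' \) in \( G \), hence of the same finite order, and concludes by the induction hypothesis. Without this case split and induction the argument does not close. (A minor further overreach: you assert that \( r \) acquires a period dividing \( |w| \); the correct conclusion is only that \( r = v^n \) is a proper power and that \( w \) is conjugate to a power of \( v \), with no divisibility relation between \( |v| \) and \( |w| \) in general.)
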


The next lemma  (which is already implicit in the proof of Theorem~\ref{Theorem : Wil14}) is a nice consequence of Theorem~\ref{Theorem : Greendlinger} and shows that all automorphisms of the group \( G_T \) constructed by Williams are canonically induced (up to inverses and conjugacy) by the automorphisms of the graph \( T \).

\begin{lemma}\label{Lemma : auto}
 Let $T\in X_{Gr}$ and \( \theta: G_T\to G_T\). Then $\theta\in\Aut{G_T}$ if and only if
 the following two conditions hold:
 \begin{enumerate-(i)}
 \item \label{c1} 
 $\theta(ww')=\theta(w)\theta(w')$  for all \( w,w'\in G_T \);
\item \label{c2}
there are $\rho\in \Aut{T}$, $t\in G_T$, and $\epsilon\in \{-1,1\}$ such that 
\begin{equation}\label{eq:c2}
\theta(v_i)=t v_{\rho(i)}^\epsilon t^{-1} \qquad \text{for all } i \in \NN. 
\end{equation}

\end{enumerate-(i)}
\end{lemma}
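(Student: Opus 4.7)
For the reverse implication, I would verify that $\theta$ is the composition of three automorphisms of $G_T$: the permutation-induced map $\tilde\rho \colon v_i \mapsto v_{\rho(i)}$, which is well-defined on $G_T$ because $\rho \in \Aut{T}$ permutes edges and non-edges and hence sends $R_T$ to itself; if $\epsilon = -1$, the inversion map $\iota \colon v_i \mapsto v_i^{-1}$, which preserves $R_T$ because $(v_iv_j)^k$ is sent to $(v_jv_i)^{-k}$ and $R_T$ is closed under inversion and cyclic permutation; and the inner automorphism $\mathrm{Inn}_t \colon x \mapsto txt^{-1}$. Composing in the appropriate order produces $\theta$.

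For the non-trivial direction, assume $\theta \in \Aut{G_T}$; condition~\ref{c1} is then immediate. The key observation is that each $v_i$ has order exactly $7$ in $G_T$: the inclusion $v_i^7 \in R_T$ gives order dividing $7$, while $v_i \neq 1$ follows from standard small-cancellation considerations (the reduced word $v_i$ has length $1$, far too short to contain more than half of any $r \in R_T$). Hence $\theta(v_i)$ has order $7$, and applying Theorem~\ref{Theorem : Greendlinger} to a cyclically reduced conjugate of $\theta(v_i)$ shows that this conjugate is a cyclic permutation of a power of some $v$ with $v^n \in R_T$. Since relators of the form $(v_pv_q)^{11}$ or $(v_pv_q)^{13}$ would give elements of order $11$ or $13$ (coprime to $7$), we must have $v = v_{\rho(i)}$ for some $\rho(i) \in \NN$, yielding
\[
\theta(v_i) = t_i v_{\rho(i)}^{k_i} t_i^{-1}
\]
for some $t_i \in G_T$ and $k_i \in \{1, \dots, 6\}$.

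The plan is then to apply the same analysis to $\theta(v_iv_j)$, which has order $11$ if $(v_i, v_j) \in T$ and order $13$ otherwise; this forces $\theta(v_iv_j)$ to be conjugate to $(v_pv_q)^{\pm 1}$ with $(v_p, v_q)$ an edge (resp.\ non-edge) of $T$. Substituting the formula above for $\theta(v_i)$ and $\theta(v_j)$ and cyclically reducing the product $t_i v_{\rho(i)}^{k_i} t_i^{-1} t_j v_{\rho(j)}^{k_j} t_j^{-1}$, the $C'(1/6)$ condition provides enough rigidity to pin down $k_i \in \{1, 6\}$ (i.e.\ $\epsilon \in \{-1, 1\}$) uniformly in $i$, to force the elements $t_i^{-1} t_j$ to be powers of $v_{\rho(i)}$ that can be absorbed into a single $t \in G_T$, and to show that $\rho$ preserves adjacency. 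Bijectivity of $\rho$ then follows by applying the same argument to $\theta^{-1}$.

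The main obstacle is this last step: the cyclic-reduction/small-cancellation bookkeeping that upgrades the pointwise information $\theta(v_i) = t_i v_{\rho(i)}^{k_i} t_i^{-1}$ to the required uniformity. Controlling the cancellation that can occur between $t_i^{-1}$ and $t_j$ and their neighbors, while still requiring the cyclically reduced form of the product to be a cyclic permutation of $(v_pv_q)^{\pm 1}$, is the technical heart of the argument; everything else is a direct application of Theorem~\ref{Theorem : Greendlinger}.
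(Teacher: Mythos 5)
Your reverse implication is fine and is, if anything, a bit more self-contained than the paper's: the paper checks surjectivity by hand and quotes Williams' injectivity argument for the map induced by \( v_i \mapsto v_{\rho(i)} \), whereas you observe directly that each of the three factors (the permutation map, inversion, conjugation) carries the symmetrized relator set \( R_T \) bijectively onto itself and hence defines an automorphism of the presented group. The forward implication is where the problem lies, and you have correctly located it yourself: everything after \( \theta(v_i) = t_i v_{\rho(i)}^{k_i} t_i^{-1} \) is a plan, not a proof, and the step you defer --- upgrading the pointwise data \( (t_i,k_i) \) to a single \( t \) and a single \( \epsilon \in \{-1,1\} \) --- is precisely the content of the paper's Claim~\ref{claim:fundamental}. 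A head-on attack on the product \( t_i v_{\rho(i)}^{k_i} t_i^{-1} t_j v_{\rho(j)}^{k_j} t_j^{-1} \) is awkward because there is no a priori bound on the conjugators, so ``controlling the cancellation'' does not obviously terminate.

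The missing idea is a normalization that makes Theorem~\ref{Theorem : Greendlinger} applicable to a \emph{cyclically reduced} word. Fix \( \bar\imath = 0 \), conjugate by \( u = t_0 \) so that \( (\theta_u\circ\theta)(v_0) = v_{\rho(0)}^{\bar k} \), and for each \( i \neq 0 \) write \( (\theta_u\circ\theta)(v_i) = w v_{\rho(i)}^{l} w^{-1} \) and strip from \( w \) its maximal leading power of \( v_{\rho(0)} \) by a further inner automorphism \( \psi_i \). The payoff is that \( (\psi_i\circ\theta_u\circ\theta)(v_0 v_i) = v_{\rho(0)}^{\bar k} z v_{\rho(i)}^{l} z^{-1} \) is then cyclically reduced and of finite order, so Greendlinger forces it to be a cyclic permutation of a power of some \( v_n v_m \); this kills \( z \), gives \( \rho(i) \neq \rho(0) \), and pins \( \bar k = l \in \{-1,1\} \) (other exponents yield infinite or wrong order). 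One must then still show that the \( \psi_i \) all coincide, which the paper does by computing \( (\psi_i\circ\theta_u\circ\theta)(v_i v_j) = v_{\rho(i)}^{\bar k} v_{\rho(0)}^{p} v_{\rho(j)}^{\bar k} v_{\rho(0)}^{-p} \) and noting that \( p \neq 0 \) would give infinite order; only after this do the conjugators merge into one \( t \). Your surjectivity argument for \( \rho \) via \( \theta^{-1} \) is the paper's, but it additionally needs the small observation that if \( v_n \) is conjugate to a power of \( v_m \) then \( n = m \), which you should supply. So: right strategy, right diagnosis of where the work is, but the technical heart of the forward direction is absent.
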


Clearly, the \( \rho \), \( t \) and \( \epsilon \) in condition~\ref{c2} are unique. 
Automorphisms \( \theta \in \Aut{G_T} \) for which $\epsilon=1$ in~\ref{c2} are called \emph{positive}, while automorphisms \( \theta \in \Aut{G_T} \) for which $\epsilon=-1$ in~\ref{c2} are called \emph{negative}.

\begin{proof} Assume first that $\theta\in \Aut{G_T}$. Condition~\ref{c1} is satisfied by definition of automorphism, so it is enough to show that condition~\ref{c2} is satisfied as well.

\begin{claim}\label{claim:fundamental}
Let \( \theta \in \Aut{G_T} \) and \( \bar{\imath} \in \NN \). Suppose that there are \( u \in G_T \) and \( \bar{k} \in \mathbb{Z} \) with \( |\bar{k}| < 7 \) such that \( \theta(v_{\bar{\imath}}) = u v^{\bar k}_{\bar{\jmath}} u^{-1} \) for some \( \bar{\jmath} \in \NN \). Then \( \bar{k} \in \{ -1,1 \} \) and there are a map \( \rho \colon \NN \to \NN \) and \(m \in \mathbb{Z} \) with \( |m| < 7 \)
such that  \( \rho(\bar \imath) = \bar \jmath \) and for all \( i \in \NN \)
\begin{equation} \label{eq:claim}
\theta(v_i) = u v_{\rho(\bar \imath)}^{m} v_{\rho(i)}^{\bar{k}} v_{\rho(\bar \imath)}^{-m}u^{-1}.
 \end{equation}
\end{claim}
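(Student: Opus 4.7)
My plan is to apply Theorem~\ref{Theorem : Greendlinger} twice --- once to each image $\theta(v_i)$ and once to each product $\theta(v_{\bar\imath}v_i)$ --- and then reconcile the two descriptions by exploiting the rigidity afforded by the small cancellation condition~\eqref{eq : sixth}.

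First, for every $i \in \NN$ the relator $v_i^7 \in R_T$ shows that $v_i$ has order exactly $7$, and hence so has $\theta(v_i)$. Among the relators $w^n \in R_T$ with $w$ cyclically reduced, only the $v_j^7$ produce torsion of order $7$: the remaining candidates $(v_jv_{j'})^{11}$ and $(v_jv_{j'})^{13}$ yield bases whose orders are coprime to $7$. Theorem~\ref{Theorem : Greendlinger} therefore yields $\theta(v_i) = u_i v_{\rho(i)}^{k_i} u_i^{-1}$ for some $\rho(i) \in \NN$, $u_i \in G_T$, and $k_i \in \mathbb{Z}$ with $1 \leq |k_i| < 7$; the hypothesis supplies this for $i = \bar\imath$ with $u_{\bar\imath} = u$, $k_{\bar\imath} = \bar k$, $\rho(\bar\imath) = \bar\jmath$.

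For $i \neq \bar\imath$, the element $v_{\bar\imath}v_i$ has order $n_i \in \{11,13\}$ (according to whether $(v_{\bar\imath},v_i) \in T$ or not), so a second application of Theorem~\ref{Theorem : Greendlinger} gives $\theta(v_{\bar\imath}v_i) = y_i(v_{a_i}v_{b_i})^{\ell_i}y_i^{-1}$ with the adjacency between $a_i$ and $b_i$ in $T$ determined by $n_i$. Equating this with $uv_{\bar\jmath}^{\bar k}u^{-1}\cdot u_iv_{\rho(i)}^{k_i}u_i^{-1}$ and invoking two standard consequences of~\eqref{eq : sixth} --- that the maximal finite subgroups of a sixth group are the conjugates of $\langle w \rangle$ for $w^n \in R_T$, and that the centralizer of a nontrivial power of such a $w$ is $\langle w \rangle$ --- one extracts $k_i = \bar k$ and $u_i = uv_{\bar\jmath}^{m_i}$ for some $m_i$ with $|m_i| < 7$. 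To promote $m_i$ to a single $m$ independent of $i$, I would analyze $\theta(v_iv_{i'})$ for distinct $i,i' \neq \bar\imath$: substituting the expressions just obtained produces a middle factor $v_{\rho(i)}^{\bar k} v_{\bar\jmath}^{m_{i'}-m_i} v_{\rho(i')}^{\bar k}$, and Theorem~\ref{Theorem : Greendlinger} applied to the resulting cyclically reduced element forces it to be a cyclic permutation of a relator power, which in turn imposes $m_{i'} \equiv m_i \pmod 7$.

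Finally, to derive $\bar k \in \{-1,1\}$, I evaluate $\theta(v_iv_{i'})$ for distinct $i,i' \in \NN\setminus\{\bar\imath\}$ using the formula~\eqref{eq:claim}: the result is $uv_{\bar\jmath}^mv_{\rho(i)}^{\bar k}v_{\rho(i')}^{\bar k}v_{\bar\jmath}^{-m}u^{-1}$, so $v_{\rho(i)}^{\bar k}v_{\rho(i')}^{\bar k}$ must be torsion. Theorem~\ref{Theorem : Greendlinger} requires this cyclically reduced word to be a cyclic permutation of a power of some $v_j$ or $v_jv_{j'}$, and for $|\bar k| \geq 2$ the block $v_{\rho(i)}v_{\rho(i)}$ prevents both (using $\rho(i) \neq \rho(i')$, which itself follows from $\theta$ being injective). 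The main obstacle is the rigidity extraction in the previous paragraph --- in particular the uniformity of the twist $m$ --- which requires genuine small cancellation input beyond a bare appeal to Theorem~\ref{Theorem : Greendlinger}.
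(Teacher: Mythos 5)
The overall skeleton of your argument --- Greendlinger applied to each \( \theta(v_i) \), then to the products \( \theta(v_{\bar{\imath}}v_i) \) to align the conjugators, then to \( \theta(v_iv_{i'}) \) to unify the twist and pin down \( \bar{k} \) --- is the same as the paper's, and your final two steps (uniformity of \( m \) modulo \( 7 \), and \( \bar{k}\in\{-1,1\} \) from the cyclically reduced form of \( v_{\rho(i)}^{\bar{k}}v_{\rho(i')}^{\bar{k}} \)) are sound. But the step you yourself flag as the obstacle is a genuine gap, and the tools you propose for it do not close it. To extract \( k_i=\bar{k} \) and \( u_i=uv_{\bar{\jmath}}^{m_i} \) you appeal to the classification of maximal finite subgroups and to centralizers of torsion elements. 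The finite-subgroup fact is inapplicable here: \( \theta(v_{\bar{\imath}}) \) and \( \theta(v_i) \) are two order-\( 7 \) elements whose product has order \( 11 \) or \( 13 \), but the subgroup they generate is (the image under \( \theta \) of) \( \langle v_{\bar{\imath}},v_i\rangle \), a triangle-type group \( \langle a,b\mid a^7,b^7,(ab)^{11}\rangle \) which is \emph{infinite}; so nothing places \( \theta(v_{\bar{\imath}}) \) and \( \theta(v_i) \) in a common finite subgroup. Likewise the centralizer statement would only bite if you had some commutation relation between \( \theta(v_{\bar{\imath}}) \) and the conjugating data of \( \theta(v_{\bar{\imath}}v_i) \), which you have not established. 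As written, the passage from ``\( \theta(v_{\bar{\imath}}v_i) \) is torsion'' to ``\( u_i\in u\langle v_{\bar{\jmath}}\rangle \) and \( k_i=\bar{k} \)'' is asserted, not proved.

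The paper closes exactly this step using nothing beyond Theorem~\ref{Theorem : Greendlinger}, via a normalization you are missing. Conjugate everything by \( u^{-1} \), write \( (\theta_u\circ\theta)(v_i)=wv_{\rho(i)}^{l}w^{-1} \), and factor the conjugator as \( w=v_{\bar{\jmath}}^{m_i}z \) with \( z \) not beginning with a power of \( v_{\bar{\jmath}} \). Then
\[
v_{\bar{\jmath}}^{\bar{k}}\, z\, v_{\rho(i)}^{l}\, z^{-1}
\]
is a \emph{cyclically reduced} representative of the conjugacy class of \( (\theta_u\circ\theta)(v_{\bar{\imath}}v_i) \), which has order \( 11 \) or \( 13 \); the second sentence of Theorem~\ref{Theorem : Greendlinger} then forces it to be a cyclic permutation of a power of some \( v_nv_m \), and a word containing both \( z \) and \( z^{-1} \) can only alternate two generators with constant exponent sign if \( z \) is trivial. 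This yields \( u_i=uv_{\bar{\jmath}}^{m_i} \) at once, and the order of \( v_{\bar{\jmath}}^{\bar{k}}v_{\rho(i)}^{l} \) then forces \( \rho(i)\neq\bar{\jmath} \) and \( \bar{k}=l\in\{-1,1\} \) already at this stage. So the ``genuine small cancellation input beyond a bare appeal to Theorem~\ref{Theorem : Greendlinger}'' that you anticipate needing is in fact unnecessary; what is needed, and what your proposal lacks, is the normalize-the-conjugator-then-check-cyclic-reducedness argument.
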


\begin{proof}[Proof of the claim]
Set \( \rho(\bar{\imath}) = \bar{\jmath} \), so that~\eqref{eq:claim} is already automatically satisfied for \( i = \bar \imath \) (independently of the value of \( m \) that we will choose). Let $\theta_u\in \Aut{G_T}$ be the inner automorphism $g\mapsto u^{-1}gu$. Clearly, ${\theta_u \circ \theta} \in \Aut {G_T}$
and  $(\theta_u \circ \theta)(v_{\bar{\imath}})=v_{\rho(\bar{\imath})}^{\bar{k}}$.
For every \( i\in\mathbb{N}\setminus \{ \bar{\imath} \} \)
the element $(\theta_u\circ \theta)(v_i)$ must have order $7$ in $G_T$, hence
there are some \(\rho(i)\in \mathbb{N}\), a reduced $w\in G_T$, and $l\in\mathbb{Z}$
with ${|l|}<7$ such that
$(\theta_u\circ \theta)(v_i)=wv_{\rho(i)}^lw^{-1}$.
Possibly $w$ may start with some power of $v_{\rho(\bar{\imath})}$:
if this is the case, let $\psi_i$ be some inner automorphism
such that $(\psi_i\circ\theta_u\circ \theta)(v_i)$ does not start with $v_{\rho(\bar{\imath})}$, i.e.\ set for \( g \in G_T \)
\begin{equation} \label{eq:psi_i}
\psi_i(g) = v_{\rho(\bar{\imath})}^{-m}gv_{\rho(\bar{\imath})}^{m}
\end{equation}
for $m \in \NN$ maximal such that $w=v_{\rho(\bar{\imath})}^mw'$.
Thus \( (\psi_i\circ\theta_u\circ \theta)(v_i)= z v_{\rho(i)}^lz^{-1}\), for some reduced word \( z\) which
does not start with a power of \( v_{\rho(\bar{\imath})}\).
Now notice that
$(\psi_i\circ\theta_u\circ \theta)(v_{\bar{\imath}}v_i)=v_{\rho(\bar{\imath})}^{\bar{k}} z v_{\rho(i)}^lz^{-1}$ must have finite order (either \( 11 \) or \( 13 \), depending on whether \( (v_{\bar{\imath}}, v_i) \in T \) or not), and
it is cyclically reduced because $z^{-1}$ does not end with any power of \( v_{\rho(\bar{\imath})}\).
Consequently, by Theorem~\ref{Theorem : Greendlinger} the element $v_{\rho(\bar{\imath})}^{\bar{k}} z v_{\rho(i)}^l z^{-1}$ must be a
cyclic permutation of some power of $v_nv_m$ for some $n,m\in \mathbb{N}$, which yields in turn that $z$ must be the identity of \( G_T \). 
Therefore $(\psi_i\circ\theta_u\circ \theta)(v_{\bar{\imath}}v_i)=v_{\rho(\bar{\imath})}^{\bar{k}} v_{\rho(i)}^l$, and the order of this element is either $11$ or $13$: this implies that
\( {\rho(\bar{\imath})} \neq {\rho(i)} \) because otherwise $v_{\rho(\bar{\imath})}^{\bar{k}}v_{\rho(i)}^l$ would have order $7$.
Moreover, the only possible values for $\bar{k}$ and $l$ are
$\bar{k}=l \in \{ -1,1 \}$ because otherwise $v_{\rho(\bar{\imath})}^{\bar{k}}v_{\rho(i)}^l$ would have infinite order.

Summing up, we proved that \( \bar{k} \in \{ -1,1 \} \) and that there are a function \( \rho \colon \mathbb{N}\to \mathbb{N} \), and an inner automorphism $\psi_i$,
for every $i \in  \NN \setminus \{ \bar{\imath} \}$, such that \( \rho(i) \neq \rho(\bar \imath ) \) and 
\begin{equation} \label{eq:auto}
(\psi_i\circ\theta_u\circ \theta)(v_i)=v_{\rho(i)}^{\bar{k}}.
\end{equation} 
We now claim that $\psi_i\circ\theta_u\circ \theta=\psi_{j}\circ\theta_u\circ \theta$ for all \( i,j \in  \NN \setminus \{ \bar{\imath} \} \). To prove this, it suffices to show that 
\( \psi_i\circ\theta_u\circ \theta \) and \( \psi_j\circ\theta_u\circ \theta \) agree on the generators. Clearly they agree on \( v_{\bar \imath} \) because by~\eqref{eq:psi_i} for an 
arbitrary \( i \in \NN \setminus \{ \bar \imath \} \) we have
\begin{equation} \label{eq:distinguished i}
(\psi_i\circ\theta_u\circ \theta)(v_{\bar \imath}) = \psi_i(v_{\rho(\bar \imath)}^{\bar{k}})=
v_{\rho(\bar \imath)}^{m}v_{\rho(\bar \imath)}^{\bar{k}} v_{\rho(\bar \imath)}^{-m} = v_{\rho(\bar \imath)}^{\bar{k}}, 
\end{equation}
independently of the integer \( m \) in the definition of \( \psi_i \).
 Next let \( i,j \in \NN \setminus \{ \bar{\imath} \} \) be arbitrary. 
By~\eqref{eq:auto} and~\eqref{eq:psi_i} one has
\begin{equation*} \label{eq:fund}
(\psi_i\circ\theta_u\circ \theta)(v_iv_j)=v_{\rho(i)}^{\bar{k}}v_{\rho(\bar{\imath})}^p v_{\rho(j)}^{\bar{k}} v_{\rho(\bar{\imath})}^{-p}
\end{equation*} 
for some $p \in \mathbb{Z}$ because 
\[ 
(\psi_i\circ\theta_u\circ \theta)(v_j) = (\psi_i \circ \psi_j^{-1} \circ \psi_j \circ\theta_u\circ \theta)(v_j) = (\psi_i \circ \psi_j^{-1})(v_{\rho(j)}^{\bar{k}}) .
 \] 
If $p\neq 0$ then \( v_{\rho(i)}^{\bar{k}}v_{\rho(\bar{\imath})}^p v_{\rho(j)}^{\bar{k}} v_{\rho(\bar{\imath})}^{-p} \) would have infinite order because \( \rho(i) \neq  \rho(\bar \imath) \) and \( \rho(j) \neq  \rho(\bar \imath) \): but since the order of \( v_i v_j \) is finite and \( \psi_i \circ \theta_u \circ \theta \in \Aut{G_T} \), this cannot be the case. Therefore \( p = 0 \) and 
\[
(\psi_i\circ\theta_u\circ \theta)(v_{j})= v_{\rho(j)}^{\bar{k}} =(\psi_j\circ\theta_u\circ \theta)(v_j).
\]  
Since all the \( \psi_i \) are the same,  by~\eqref{eq:auto} and~\eqref{eq:psi_i} there is a fixed  
\( m\in \mathbb{Z} \) (independent of \( i \)) such that \( (\theta_u \circ \theta)(v_i) = v_{\rho( \bar \imath)}^{m} v_{\rho(i)}^{\bar{k}} v_{\rho( \bar \imath)}^{-m} \), so that
\begin{equation} \label{eq:final}
\theta(v_i)= u  v_{\rho(\bar \imath)}^{m} v_{\rho(i)}^{\bar{k}} v_{\rho(\bar \imath)}^{-m} u^{-1}
 \end{equation}
 for every \( i \in \NN \setminus \{ \bar \imath \} \). But as observed at the beginning of this proof, equation~\eqref{eq:final} holds also for \( i = \bar \imath \), hence we are done.
\end{proof}

Consider now $\theta(v_0)$.
 Since $\theta(v_0)$ must have order $7$ in $G_T$,
Theorem~\ref{Theorem : Greendlinger} implies that there are some
\( n\in \mathbb{N} \) and
 \(w\in G_T \) such that
$\theta(v_0)=wv_{n}^{\bar{k}}w^{-1}$ with $\bar{k}\in\mathbb{Z}$
such that ${|\bar{k}|}<7$. Therefore we can apply Claim~\ref{claim:fundamental} with \( \bar{\imath} = 0 \),\( \bar \jmath =n \), and \( u = w \) to get a map \( \rho \) such that condition~\ref{c2} of the lemma is satisfied for \( \epsilon = \bar{k} \) and \( t=u v_{\rho(0)}^{m}\): thus it only remains to show that \( \rho \in \Aut{T} \). 
 
First observe that the orders of \( v_i v_j \) and \(v_{\rho(i)}v_{\rho(j)} \) are both equal to the order of \( v_{\rho(i)}^\epsilon v_{\rho(j)}^\epsilon \). In the former case one can use the fact that \( (\theta_t \circ \theta)(v_i v_j) = v_{\rho(i)}^\epsilon v_{\rho(j)}^\epsilon \) and that \( \theta_t \circ \theta \) is a group automorphism, where \( \theta_t \) is the map \( g \mapsto t^{-1} g t \). In the latter case, if \( \epsilon = -1 \) one can use the fact that \( v_{\rho(i)}v_{\rho(j)} \) and \( v_{\rho(j)} v_{\rho(i)} \) have the same order because the edge relation of a graph is symmetric, and that the latter has the same order of \( (v_{\rho(j)} v_{\rho(i)})^{-1} = v_{\rho(i)}^{-1} v_{\rho(j)}^{-1} \). Thus \( v_i v_j \) and \( v_{\rho(i)}v_{\rho(j)} \) have the same order. In particular, \( \rho \) is injective because if \( \rho(i) = \rho(j) \) then \( v_{\rho(i)} v_{\rho(j)} \) has order \( 7 \), so that \( v_i v_j \) has order \( 7 \) as well, and thus \( i = j \) by  definition of \( R_T \). 
Moreover 
\begin{align*}
i,j \text{ are adjacent in } T & \iff v_iv_j \text{ has order } 11 \\
& \iff v_{\rho(i)} v_{\rho(j)} \text{ has order } 11 \\
& \iff \rho(i), \rho(j) \text{ are adjacent in } T.
\end{align*} 

Finally, we show that \( \rho \) is surjective, i.e.\ that for every \( n \in \NN \) there is \( i \in \NN \) with \( n = \rho(i) \). First notice that if \(
v_{n} \) is conjugate to a power of \( v_{m} \), then \( n=m\). Indeed, if \( v_n = u v^k_m u^{-1}\) for some \( u \in G_T \) and \(k \in \mathbb{Z} \), then \( v_m v_n = v_m u v^k_m u^{-1}\). 
It follows that \( u \) is a power of \( v_m \), because otherwise  \(v_m u v^k_m u^{-1}\) would have infinite order, contradicting the fact that \( v_m v_n \) has finite order by definition of \( R_T \): therefore
\( v_m v_n =  v_m u v^k_m u^{-1} = v_m v^k_m \) (which also implies \( k \neq -1, 6 \) because \( v_m v_n \) is not the identity). But \( v_m v^k_m \) can only have order \( 7 \), whence \( n=m\).
Now fix an arbitrary \( n \in \NN \). Since  \( \theta^{-1}( v_{n}) \) has order \( 7 \), by Theorem~\ref{Theorem : Greendlinger} 
there are \( i\in \mathbb{N} \), \( u\in G_{T} \), and \( k' \in \mathbb{Z} \) such that
\( \theta^{-1}(v_{n})=u v_{i}^{k'}u^{-1} \), whence \( v_n = \theta(u) \theta(v_i)^{k'} \theta(u)^{-1} \). 
On the other hand, \( \theta(v_{i})= t v^{\epsilon}_{\rho(i)} t^{-1} \) by Claim \ref{claim:fundamental}, and substituting this value of \( \theta(v_i) \) in the previous equation one sees that \( v_n \) is conjugate to the \( (\epsilon k')\)-th power of \( v_{\rho(i)} \): thus \( n = \rho(i) \) by the observation above.

For the converse implication in Lemma~\ref{Lemma : auto}, assume that \( \theta \) satisfies \ref{c1} and \ref{c2}. Since \ref{c1} states that \( \theta \) is a group homomorphism, we are left with proving that  \( \theta \) is a bijection. Consider the inner automorphism \( \theta_{t} \), where \( t \) is as in~\ref{c2}, sending \( g\) to \( t^{-1} g t \), so that \( (\theta_t \circ \theta)( v_{i}) =  v_{\rho(i)}^{\epsilon} \) for every \( i \in \NN \): it clearly suffices to show that \( \theta_{t} \circ \theta \)
is a bijection.
For every nontrivial \( w = v_{i_{0}}\dots v_{i_{n}} \in G_{T} \) one has
\[
(\theta_{t} \circ \theta) (v^{\epsilon}_{\rho^{-1}(i_{0})}\dots v^{\epsilon}_{\rho^{-1}(i_{n})})=w,
\]
therefore \( \theta_{t} \circ \theta \) is surjective. 
As for injectivity, recall from the proof of~\cite[Theorem 5.1]{Wil14} that since \( \rho \) is an automorphism of \( T \), then the map \( \theta' \) induced by \( v_i \mapsto v_{\rho(i)} \) is an injection from \( G_T \) into itself. Thus if \( \epsilon = 1 \) we are done because \( \theta_t \circ \theta = \theta' \); if instead \( \epsilon = -1 \), then \( \theta_t \circ \theta \) is the composition of \( \theta' \) with the map induced by \( v_i \mapsto v_i^{-1} \), and since the latter is clearly injective we are done again.
\end{proof}

\begin{remark} \label{rmk:reductioniso}
Let \( T,S \in X_{Gr} \). If \( \rho \colon T \to S \) is an isomorphism, \( t \in G_S \), and \( \epsilon \in \{ -1 , 1 \} \), then the natural extension to the whole \( G_T \) of the map 
\begin{equation} \label{eq:iso}
\theta(v_i) = t v_{\rho(i)}^\epsilon t^{-1}.
 \end{equation}
 is an isomorphism between \( G_T \) and \( G_S \). Conversely, the proof of Lemma~\ref{Lemma : auto} can be straightforwardly adapted to show that every isomorphism \( \theta \colon G_T  \to G_S \) is canonically induced 
 by some isomorphism \( \rho \colon T \to S \) as above, i.e.\ that there are \( t \in G_S \) and \( \epsilon \in \{ -1,1 \} \) such that \( \theta \) satisfies~\eqref{eq:iso}. In particular, this shows that 
 \( T \cong S \iff G_T \cong G_S \).
\end{remark}

We are now ready to prove the main result of this section.

\begin{theorem}\label{Theorem : Gp uni}
The relation $\embeds_{Gp}$ is an invariantly universal  $\analytic$ quasi-order (when paired with the relation $\cong_{Gp}$ of isomorphism on countable groups). In particular,
for every \( \analytic \) quasi-order \( R \) there is an \( \L_{\omega_1 \omega} \)-elementary class of countable groups such that the embeddability relation on it is Borel bi-reducible with \( R \).
\end{theorem}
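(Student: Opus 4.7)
My plan is to verify the three sufficient conditions of Theorem~\ref{Theorem : CMMR13} with \( Q = \embeds_{Gp} \) on \( X_{Gp} \), \( E = \cong_{Gp} \), and \( f \colon \mathbb{G} \to X_{Gp} \) defined as the restriction to \( \mathbb{G} \subseteq X_{Gr} \) of Williams' assignment \( T \mapsto G_T \) from Theorem~\ref{Theorem : Wil14}.

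Conditions~\ref{condition : 1} and~\ref{condition : 2} should be essentially automatic given the material already developed. For~\ref{condition : 1}, since \( T \mapsto G_T \) already Borel reduces \( \embeds_{Gr} \) to \( \embeds_{Gp} \) on all of \( X_{Gr} \), it does so also on the subspace \( \mathbb{G} \), where embeddability coincides with \( \embeds_{\mathbb{G}} \). Condition~\ref{condition : 2} requires \( T =_{\mathbb{G}} S \iff G_T \cong_{Gp} G_S \): since graphs in \( \mathbb{G} \) were specifically chosen so that \( =_{\mathbb{G}} \) agrees with \( \cong_{\mathbb{G}} \), this reduces to the equivalence \( T \cong S \iff G_T \cong G_S \), which is exactly the statement of Remark~\ref{rmk:reductioniso}.

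The delicate step is condition~\ref{condition : 3}. Since \( X_{Gp} \) is itself a standard Borel space of countable structures on which \( \cong_{Gp} \) arises as the orbit equivalence relation of the logic action of \( S_\infty \), I would invoke the simplification described at the end of Section~\ref{sec:preliminaries} and take \( Z = Y = X_{Gp} \), \( \boldsymbol H = S_\infty \), and \( h = \id \), so that \( \Stab{h(f(T))} = \Aut{G_T} \). The task is then to show that \( T \mapsto \Aut{G_T} \) is a Borel map from \( \mathbb{G} \) into \( \Subg{S_\infty} \) (with the Effros Borel structure). Here Lemma~\ref{Lemma : auto} does most of the work: because every \( T \in \mathbb{G} \) is rigid, its only graph automorphism is the identity, and hence every \( \theta \in \Aut{G_T} \) is of the very explicit form \( \theta(v_i) = t v_i^\epsilon t^{-1} \) for some \( t \in G_T \) and \( \epsilon \in \{-1,1\} \). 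Using this concrete parametrization, I would check that the relation \( \{(T,\sigma) \in \mathbb{G} \times S_\infty : \sigma \in \Aut{G_T}\} \) is Borel and then conclude Borelness of \( T \mapsto \Aut{G_T} \) either by a direct computation on Effros-basic opens, or by appealing to the standard fact that for any Borel action of a Polish group \( \boldsymbol G \) on a standard Borel space the stabilizer map is Borel into \( F(\boldsymbol G) \). This passage from Lemma~\ref{Lemma : auto} to the Borelness of the stabilizer assignment is the only point requiring care, and I expect it to be the main technical hurdle.

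The ``In particular'' clause is then immediate from the definition of invariant universality: for any analytic quasi-order \( R \) one obtains a Borel \( \cong_{Gp} \)-invariant \( B \subseteq X_{Gp} \) with \( \embeds_{Gp} \restriction B \sim_B R \), and the Lopez-Escobar theorem recalled just before Definition~\ref{Definition : invariantly universal} identifies \( B \) with an \( \L_{\omega_1 \omega} \)-elementary class of countable groups.
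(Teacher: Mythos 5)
Your overall architecture coincides with the paper's: both verify conditions \ref{condition : 1}--\ref{condition : 3} of Theorem~\ref{Theorem : CMMR13}, handle \ref{condition : 1} via Williams' reduction, handle \ref{condition : 2} via Remark~\ref{rmk:reductioniso} together with the coincidence of $=_{\mathbb{G}}$ and $\cong_{\mathbb{G}}$, and both reduce \ref{condition : 3} to the Borelness of $T \mapsto \Aut{f(T)}$ using the simplification for spaces of countable structures. The gap is in how you propose to establish that Borelness. Your fallback --- the ``standard fact'' that for any Borel action of a Polish group the stabilizer map is Borel into $F(\boldsymbol{G})$ --- is not a fact: for $U$ open, the condition $\Stab{x} \cap U \neq \emptyset$ is an existential statement over the group, so the stabilizer map is in general only measurable with respect to the $\sigma$-algebra generated by the $\analytic$ sets. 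For the logic action of $S_\infty$ this failure is genuine (rigidity of countable structures is in general a properly co-analytic property), and it is precisely the reason condition \ref{condition : 3} of Theorem~\ref{Theorem : CMMR13} is a substantive hypothesis rather than an automatic consequence of \ref{condition : 1} and \ref{condition : 2}.

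That leaves your first route, the direct computation on Effros-basic open sets, which is indeed what the paper does --- but it is the entire technical content of the proof, not a routine check. One must rewrite ``there is $h \in \Aut{\mathcal{G}_T}$ with $h \supseteq s$'' as a Borel condition on $(T,s)$; the paper's Proposition~\ref{Proposition : auto} does this by combining (a) the parametrization of $\Aut{G_T}$ from Lemma~\ref{Lemma : auto}, (b) a condition asserting that $s$ is multiplicatively consistent and acts correctly on the codes of the generators it mentions, and (c) the residual condition $\sigma(T) \cap N_r \neq \emptyset$, where rigidity of $T \in \mathbb{G}$ makes the remaining quantifier over $\Aut{T}$ harmless (Corollary~\ref{Corollary : Sigma Borel}). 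Crucially, step (b) only suffices to recover a full automorphism from the finite datum $s$ because the coding bijection $\phi_T \colon G_T \to \NN$ is chosen so that generators and their inverses receive fixed codes independent of $T$ and every subword of $\phi_T^{-1}(n)$ receives a code below $n$. Your proposal takes $f(T) = G_T$ literally, never fixes such a coding, and therefore has no handle on what an Effros-basic open set of $\Subg{S_\infty}$ says about a finite partial injection of $\NN$; without this the ``direct computation'' cannot be carried out. In short: right strategy and the same skeleton as the paper, but the one step you yourself flag as the main technical hurdle is left open, and the proposed shortcut around it is false.
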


To fit the setup used in the above statement, each group $G_T$ must be coded as
an element \( \mathcal{G}_T\) of $X_{Gp}$ (the space of groups on \( \NN \)) via some bijection $\phi_T \colon G_T \xrightarrow[\text{onto}]{\text{1--1}} \mathbb{N}$.
In general, the specific coding is irrelevant, the only requirement being that the
map \(\mathcal{G}\) sending \(T\) to \(\mathcal{G}_{T}\), i.e.\ to the unique group isomorphic to \(G_{T}\) via \(\phi_{T}\),
be a Borel map from \(X_{Gr}\) to \(X_{Gp}\).
However, for our proof it is convenient to further require that for every $T\in X_{Gr}$, all generators  of $G_T$ and their inverses are sent by $\phi_T$ to some fixed natural numbers (independently of \( T \)),
and that for every reduced word $w$, all its subwords are sent by $\phi_T$ to numbers smaller than $\phi_T(w)$ (this technical conditions will be used in the proof of Proposition~\ref{Proposition : auto}). 
Thus for every $T\in X_{Gr}$ we fix a bijection $\phi_T:G_T\to \mathbb{N}$ such that
\begin{itemizenew}
\item
 $\phi_T(1_{G_T})=0$;
\item
 $\phi_T(v_i)=3i+1$;
\item 
$\phi_T(v_i^{-1})=3i+2$;
\item 
for every $n\in \mathbb{N}$ and for all subword $w$ of $\phi_T^{-1}(n)$, $\phi_T(w)<n$.
\end{itemizenew}
(Notice that words different from the identity, the generators and their inverses are sent to numbers of the form \( 3i+3 \).)

Let $\star_{T} \colon \NN\times\NN \to \NN$ be the binary operation on \( \NN \) 
such that
\( \mathcal{G}_T = (\NN,\star_T) \) is isomorphic to \( G_T \) via \( \phi_T \), that is: \( n\star_T m \coloneqq\phi_T(\phi_T^{-1}(n)\phi_T^{-1}(m)) \),
for every \( n, m \in \NN\).
Let \( \pre{< \NN}{(\NN) } \) be the set of all \emph{injective} \( t \in \pre{< \NN}{\NN} \), where \( \pre{< \NN}{\NN} \) is the set of finite sequences of natural numbers. 
Given $t\in \pre{<\NN}{(\NN)}$, let $N_t=\set{g\in S_\infty}{g\supseteq t}$. Clearly the set
\(
\set{N_t}{t \in \pre{< \NN}{(\NN)}}
 \) 
is a basis for \( S_\infty \).
Consider the maps
\[
\sigma \colon X_{Gr} \to \Subg {S_\infty}, \qquad T\mapsto \Aut T
\] 
and
\[
\Sigma \colon X_{Gr} \to \Subg{S_\infty}, \qquad T\mapsto \Aut {\mathcal{G}_T}.
\]

\begin{proposition}\label{Proposition : auto}
Let $T\in X_{Gr}$ and $s\in\pre{<\NN}{(\NN)}$. Then
\( \Sigma(T)\cap N_s\neq \emptyset\) if and only if the following conditions hold:
\begin{enumerate-(1)}
\item \label{condition1 : prop auto}
for every $n,m\in\dom(s)$, if \( n\star_Tm\in \dom (s) \) then \( s(n\star_T m)=s(n) \star_T s(m) \)
\item \label{condition2 : prop auto}
there is $r \colon \set{i}{3i+1\in \dom(s)}\to \NN$ such that
\begin{enumerate-(a)}
\item \label{item:a}
$\sigma(T)\cap N_r\neq\emptyset$
\item  \label{item:b}
there are $k,k'\in\mathbb{N}$ and  $l\in\{0,1\}$ such that \( k' \) is the inverse of \( k \) with respect to \( \star_T \) (i.e.\ \( k \star_T k'  = 0 \)) and 
\[\forall i\in\NN \, (3i+1\in\dom (s) \to s(3i+1)= k\star_T (3r(i)+1+l)\star_T k').\]
\end{enumerate-(a)}
\end{enumerate-(1)}
\end{proposition}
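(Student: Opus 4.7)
\medskip

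The plan is to transfer the question via the coding \( \phi_T \) into one about automorphisms of the group \( G_T \), and then read off the two conditions from the structural description provided by Lemma~\ref{Lemma : auto}. Concretely, if \( g \in S_\infty \) then \( g \in \Sigma(T) \) iff \( \theta \coloneqq \phi_T^{-1} \circ g \circ \phi_T \in \Aut{G_T} \), and by Lemma~\ref{Lemma : auto} this holds iff \( \theta \) is a group homomorphism acting on generators as \( v_i \mapsto t v_{\rho(i)}^\epsilon t^{-1} \) for some \( \rho \in \Aut{T} \), \( t \in G_T \), and \( \epsilon \in \{-1,1\} \). Thus \( g \supseteq s \) translates to a system of constraints on \( s \), and the task is to show these constraints are precisely \ref{condition1 : prop auto} and \ref{condition2 : prop auto}.

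For the forward direction, suppose \( g \in \Sigma(T) \cap N_s \) and let \( \theta, \rho, t, \epsilon \) be as above. Since \( g \) is a homomorphism of \( (\NN, \star_T) \), condition \ref{condition1 : prop auto} is immediate. For \ref{condition2 : prop auto}, set \( r = \rho \restriction \{ i : 3i + 1 \in \dom(s) \} \), \( k = \phi_T(t) \), \( k' = \phi_T(t^{-1}) \), and \( l = 0 \) if \( \epsilon = 1 \), \( l = 1 \) if \( \epsilon = -1 \). Then \( k \star_T k' = 0 \) by definition of \( \star_T \); item~\ref{item:a} is witnessed by \( \rho \in \Aut{T} \supseteq r \); and item~\ref{item:b} follows by applying \( \phi_T \) to the formula \( \theta(v_i) = t v_{\rho(i)}^\epsilon t^{-1} \), noting that \( \phi_T(v_{\rho(i)}^\epsilon) = 3r(i) + 1 + l \) when \( i \in \dom(r) \).

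For the backward direction, assume \ref{condition1 : prop auto} and \ref{condition2 : prop auto} and let \( r, k, k', l \) be as in \ref{condition2 : prop auto}. Extend \( r \) to some \( \rho \in \Aut{T} \) using \ref{item:a}, set \( t = \phi_T^{-1}(k) \) (so \( t^{-1} = \phi_T^{-1}(k') \) because \( k \star_T k' = 0 \)), and let \( \epsilon = 1 - 2l \). By Lemma~\ref{Lemma : auto} the assignment \( v_i \mapsto t v_{\rho(i)}^\epsilon t^{-1} \) extends uniquely to some \( \theta \in \Aut{G_T} \); let \( g \coloneqq \phi_T \circ \theta \circ \phi_T^{-1} \in \Sigma(T) \). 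It remains to verify \( g \supseteq s \), which is the main point of the argument.

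The verification proceeds by induction on \( n \in \dom(s) \), crucially exploiting that \( \dom(s) \) is an initial segment of \( \NN \) and that \( \phi_T \) sends every proper subword of a reduced word to a strictly smaller natural number. For \( n = 0 \) (corresponding to \( 1_{G_T} \)), condition~\ref{condition1 : prop auto} applied to \( 0 = 0 \star_T 0 \) forces \( s(0) \) to be idempotent hence \( s(0) = 0 = g(0) \). For \( n = 3i+1 \) (a generator \( v_i \)), condition~\ref{item:b} gives \( s(3i+1) = k \star_T (3r(i)+1+l) \star_T k' = \phi_T(t v_{\rho(i)}^\epsilon t^{-1}) = g(3i+1) \). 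For \( n = 3i+2 \) (the inverse generator \( v_i^{-1} \)), note \( 3i+1 \in \dom(s) \) and \( (3i+1) \star_T (3i+2) = 0 \in \dom(s) \); applying \ref{condition1 : prop auto} together with \( s(0) = 0 \) forces \( s(3i+2) \) to be the \( \star_T \)-inverse of \( s(3i+1) \), which equals \( g(3i+2) \). Finally, if \( \phi_T^{-1}(n) \) has length at least \( 2 \), decompose it as \( a \cdot b \) with \( a, b \) nonempty proper subwords: then \( \phi_T(a), \phi_T(b) < n \) hence lie in \( \dom(s) \), the inductive hypothesis gives \( s(\phi_T(a)) = g(\phi_T(a)) \) and similarly for \( b \), and \ref{condition1 : prop auto} yields \( s(n) = s(\phi_T(a)) \star_T s(\phi_T(b)) = g(\phi_T(a)) \star_T g(\phi_T(b)) = g(n) \). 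The hardest step is this inductive propagation, where one must check that the technical properties imposed on the coding \( \phi_T \) really do allow \ref{condition1 : prop auto} to pin down \( s \) uniquely from its values on generator-codes.
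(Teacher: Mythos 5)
Your proposal is correct and follows essentially the same route as the paper: transfer via \( \phi_T \), apply Lemma~\ref{Lemma : auto} in both directions, and in the converse use condition~\ref{condition1 : prop auto} together with the subword-monotonicity of \( \phi_T \) to propagate agreement from the generator codes to all of \( \dom(s) \). Your explicit induction on \( n\in\dom(s) \) (including the cases \( n=0 \) and \( n=3i+2 \)) is just a more detailed write-up of the step the paper dispatches in one sentence.
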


\begin{proof}
First assume that \( \Sigma(T)\cap N_s\neq \emptyset \), i.e.\ that 
there is some \( h \in\Aut{\mathcal{G}_T}\) such that \( h\supseteq s \).   
Since \(h \) is a homomorphism, if \( n,m \in \dom(s) \) are such that \( n\star_T m\in\dom (s) \) then
\[s(n\star_Tm)=h(n\star_Tm)=h(n)\star_Th(m)=s(n)\star_Ts(m),\]
which proves~\ref{condition1 : prop auto}. 
To prove \ref{condition2 : prop auto}, set $\theta\coloneqq\phi_T^{-1}\circ h\circ\phi_T$. Since $ \theta \in \Aut{G_T}$,
by Lemma~\ref{Lemma : auto} there are $\rho\in \Aut{T}$, $t\in G_T$, and $\epsilon\in\{-1,1\}$ such that for every $i\in\NN$
\[
\theta(v_i)=tv_{\rho(i)}^\epsilon t^{-1}.
\]
Setting $r= \rho\restriction \set{i\in\NN}{3i+1\in\dom (s)}$, one clearly
has $\rho\in \sigma(T)\cap N_r$, so that \(  \sigma(T)\cap N_r \neq \emptyset \). Moreover, setting $l\coloneqq - \frac{\epsilon -1 }{2}$, for every $i$ such that $3i+1\in s$
\[
s(3i+1)= (\phi_T\circ \theta)(v_i)=\phi_T(tv_{\rho(i)}^\epsilon t^{-1})=
k\star_T (3 r(i)+1+l)\star_{T} k',
\]
where $k=\phi_T(t)$ and \( k' = \phi_T(t^{-1}) \).

Conversely, assume that both~\ref{condition1 : prop auto} and~\ref{condition2 : prop auto} hold.
By~\ref{item:a} of condition~\ref{condition2 : prop auto} there is $\rho\in \Aut{T}$ such that $\rho\supseteq r$. Define 
\begin{align*}
h(0) & =0, \\
h(3i+1) & =k\star_T(3\rho(i)+1+l)\star_Tk', \\
h(3i+2) & =k\star_T(3\rho(i)+2-l)\star_Tk',
\end{align*}
and then extend \( h \) to the whole \( \NN \) via the operation \( \star_T \), i.e.\ if $n=\phi_T( v^{\epsilon_0}_{i_0}\dots v^{\epsilon_c}_{i_c})$ with \( \epsilon_0, \dotsc, \epsilon_c \in \{ -1,1 \} \), set
\( h(n)  =h(\phi_T(v^{\epsilon_0}_{i_0})) \star_T\dotsc \star_T h(\phi_T(v^{\epsilon_c}_{i_c}))$.
By~\ref{item:b} of condition~\ref{condition2 : prop auto}, the maps \(h\) and \(s\) agree on the codes for generators. Moreover,
 the way \(\phi_{T}\) was defined ensures that
if \(n=\phi_T( v^{\epsilon_0}_{i_0}\dots v^{\epsilon_c}_{i_c})\) belongs to
\(\dom(s)\), then so do all of \(\phi_{T}(v^{\epsilon_0}_{i_0}) \), \( \dotsc \), \(\phi_{T}(v^{\epsilon_c}_{i_c})\); thus \(h\supseteq s\) by
condition~\ref{condition1 : prop auto}.
Then one easily checks that $\theta\coloneqq \phi_T^{-1}\circ h\circ \phi_T$ satisfies \ref{c1}--\ref{c2} of Lemma \ref{Lemma : auto} with the chosen \( \rho \), \(t=\phi_T^{-1}(k)\), and \( \epsilon= 1-2l\).
Therefore \( \theta \in \Aut{G_T}\), whence $h$ is an automorphism of $\mathcal{G}_T$ witnessing \(  \Sigma(T)\cap N_s\neq \emptyset \).
\end{proof}

\begin{corollary}\label{Corollary : Sigma Borel}
Let \( B \subseteq X_{Gr} \) be a Borel set.
If $\sigma \restriction B$ is Borel, then $\Sigma \restriction B$ is Borel as well.
\end{corollary}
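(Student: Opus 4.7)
The plan is to reduce the question to showing that for every basic open set $N_s$ of $S_\infty$, the preimage
\[
 \{T \in B : \Sigma(T) \cap N_s \neq \emptyset \}
\]
is Borel. Indeed, since $\{N_s : s \in \pre{<\NN}{(\NN)} \}$ is a (countable) basis for $S_\infty$, the Effros Borel structure on $F(S_\infty)$ --- and hence the induced Borel structure on the Borel subspace $\Subg{S_\infty}$ --- is generated by the sets of the form $\{H \in \Subg{S_\infty} : H \cap N_s \neq \emptyset\}$, so Borelness of the preimages of these generating sets is equivalent to Borelness of $\Sigma \restriction B$.

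Next, I would invoke Proposition~\ref{Proposition : auto}, which rewrites the condition $\Sigma(T) \cap N_s \neq \emptyset$ as the conjunction of~\ref{condition1 : prop auto} and~\ref{condition2 : prop auto}. Since $T \mapsto \mathcal{G}_T$ is Borel by construction, the binary operation $\star_T$ depends Borel-ly on $T$ in the sense that for every fixed $n,m \in \NN$ the function $T \mapsto n \star_T m$ is Borel; hence condition~\ref{condition1 : prop auto}, which involves only finitely many evaluations of $\star_T$ (namely at pairs $(n,m) \in \dom(s)^2$) and equalities among natural numbers, defines a Borel subset of $B$.

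For condition~\ref{condition2 : prop auto}, I would observe that $\dom(s)$ is finite, so there are only countably many finite functions $r \colon \{i : 3i+1 \in \dom(s)\} \to \NN$, and countably many tuples $(k,k',l) \in \NN \times \NN \times \{0,1\}$. The condition therefore unfolds as a countable disjunction, indexed by $(r,k,k',l)$, of the conjunction of: (a) $\sigma(T) \cap N_r \neq \emptyset$, which is Borel in $T \in B$ by the hypothesis that $\sigma \restriction B$ is Borel; (b) the equality $k \star_T k' = 0$, which is Borel; and (c) the finitely many equalities $s(3i+1) = k \star_T (3r(i)+1+l) \star_T k'$ for $i$ in the finite set $\{i : 3i+1 \in \dom(s)\}$, again Borel by the Borel dependence of $\star_T$ on $T$. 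A countable union of Borel sets is Borel, so condition~\ref{condition2 : prop auto} defines a Borel subset of $B$.

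Intersecting the Borel sets arising from~\ref{condition1 : prop auto} and~\ref{condition2 : prop auto} yields that $\{T \in B : \Sigma(T) \cap N_s \neq \emptyset\}$ is Borel, completing the proof. There is no real obstacle here: the only non-trivial ingredient is the Borel dependence of $\sigma$ on $T$, which is precisely the hypothesis, and the countability of the existential quantifiers in~\ref{condition2 : prop auto}, which is ensured by the finiteness of $\dom(s)$ together with Proposition~\ref{Proposition : auto}'s role in eliminating the a priori uncountable quantification over conjugating elements $t \in G_T$ in favour of natural-number codes $k,k'$.
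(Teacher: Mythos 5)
Your argument is correct and follows the same route as the paper: reduce to the preimages of the generators $\{H \in \Subg{S_\infty} : H \cap N_s \neq \emptyset\}$ of the Effros Borel structure, apply Proposition~\ref{Proposition : auto}, and observe that every clause is readily Borel except part~\ref{item:a} of condition~\ref{condition2 : prop auto}, which is Borel by the hypothesis on $\sigma \restriction B$. You merely spell out the countable disjunction over $(r,k,k',l)$ that the paper leaves implicit in the phrase ``readily Borel.''
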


\begin{proof}
For \( s \in \pre{<\NN}{(\NN)} \), the preimage under \( \Sigma \restriction B \) of the generator \( \set{\mathbf{G} \in \Subg{S_\infty}}{\mathbf{G} \cap N_s \neq \emptyset} \) of the Effros Borel structure 
of \( \Subg{S_\infty} \) is \( \set{ T \in B }{ \Sigma(T) \cap N_s \neq \emptyset }\). By Proposition~\ref{Proposition : auto}, this is the set of graphs  \( T \in B \) satisfying 
conditions~\ref{condition1 : prop auto}--\ref{condition2 : prop auto} of Proposition~\ref{Proposition : auto}, which are all readily Borel with the possible exception of part~\ref{item:a} of condition~\ref{condition2 : prop auto}: 
but if \( \sigma \restriction B \) is a Borel map, then also that one becomes Borel, hence we are done.
\end{proof}

\begin{proof}[Proof of Theorem \ref{Theorem : Gp uni}]
It is enough to show that $(\embeds_{Gp},\cong_{Gp})$ satisfies conditions~\ref{condition : 1}--\ref{condition : 3} of Theorem \ref{Theorem : CMMR13}.
Since \(\mathcal{G}_{T}\) and \(G_{T}\) are isomorphic for every \(T\in X_{Gr}\), 
the map $f \colon \mathbb{G}\rightarrow X_{Gp},\, T\mapsto \mathcal{G}_T$
reduces \( \embeds_\mathbb{G} \) to \( \embeds_{Gp} \) by
Theorem~\ref{Theorem : Wil14} and thus \ref{condition : 1} is proved.
Part~\ref{condition : 2} follows from
the fact that $=_{\mathbb{G}}$ and $\cong_{\mathbb{G}}$ coincide and from
 Remark~\ref{rmk:reductioniso}, which still holds after replacing \(G_{T}\) with \(\mathcal{G}_{T}\).

Finally, we prove~\ref{condition : 3}. Since \( X_{Gp} \) is a space of countable structures and we are considering the isomorphism relation \( \cong_{Gp} \) on it, we are in the simplified situation described after
Theorem~\ref{Theorem : CMMR13}, so that it suffices to show that the map \( \Sigma \restriction \mathbb{G} \colon \mathbb{G} \to \Subg{S_\infty}, \, T\mapsto \Aut{\mathcal{G}_T} \) is Borel. Since every $T\in \mathbb{G}$ is rigid, the map $\sigma \restriction \mathbb{G} \colon \mathbb{G} \to \Subg{S_\infty}, \,  T\mapsto \Aut T$ is constant, hence Borel.
Therefore $\Sigma \restriction \mathbb{G}$ is Borel as well by Corollary~\ref{Corollary : Sigma Borel} and we are done.
\end{proof}

\section{Topological groups} \label{sec:topgroups}

In this section we study two different quasi-orders between topological groups.
The reduction defined by Williams in Theorem \ref{Theorem : Wil14} plays a key role, but it is convenient to encode the groups \( G_T \) in
a different standard Borel space. This variation allows us to prove the main theorems of this section in a simpler and direct way.

The \emph{countable random graph} \( R_\omega \) (see \cite{Rad64}) is a countable graph such that for any two finite sets \( A,B \) of vertices, there is a vertex \( x \) such that
\[
\forall y \in A \, (x \mathrel{R_\omega} y) \wedge \forall z \in B \, \neg (x \mathrel{R_\omega} z).
\]
An explicit definition of \( R_\omega \) (up to isomorphism) is the following: fix an enumeration of all prime numbers \( \set{p_n}{n\in\NN} \) and set for every 
 \( m,n\in\NN \setminus \{0,1 \}\)
\[
m\mathbin{R_\omega}n\quad\Leftrightarrow\quad {{p_m\mid n}\vee{p_n\mid m}}.
\]
Notice that each \( T\in X_{Gr} \) can be embedded into \( R_\omega \) in such a way that the map \( X_{Gr} \to {2}^{R_\omega}, T \mapsto  T' \) associating to every \( T \) an isomorphic subgraph \( T' \) of \( R_\omega \) is continuous. (This can be done due to the property which defines \( R_\omega\).)

Given $T\in X_{Gr}$, let $G_T$ be the group associated to $T$ defined as in the previous section (see the praragraph after Theorem~\ref{Theorem : Wil14}). Let $\SG{G_{R_\omega}}$  be set of all subgroups 
of $G_{R_\omega}$.\footnote{The space \(\SG{ G_{R_\omega} }\) is different from \( \Subg{\mathbf{G}}\). While \(\SG{G_{R_\omega} }\) is defined as the space of \emph{all} subgroups of \( G_{R_\omega} \), the space \(\Subg{\mathbf{G}}\) is the space of \emph{closed} subgroups of the Polish group \(\boldsymbol{G}\). }
The space \( \SG{G_{R_\omega}} \) can be construed as a closed subset of \( {2}^{G_{R_\omega}} \) by identifying each group with the characteristic function of its domain, and thus it is a Polish space with the induced topology inherited from \( {2}^{G_{R_\omega}} \).
Consider the variant of \( G \)

\[
\widetilde G \colon X_{Gr} \to \SG{G_{R_\omega}}, \qquad T \mapsto \widetilde G_T,
\] 
where \( \widetilde G_T \) is the subgroup of $G_{R_\omega}$ (isomorphic to $G_T$) whose generators are those appearing in $T \subseteq R_\omega$. Notice that the map \( \widetilde G \) is Borel as well.

Given a class $\mathbb{H}$ of Polish groups, we say that $\boldsymbol{W} \in\mathbb{H}$ 
is \emph{universal} (for \( \mathbb{H} \)) if every $\boldsymbol{H} \in \mathbb{H}$ topologically embeds into $\boldsymbol{W}$. 
The subsequent lemma will be used (twice) to define Borel reductions with target in hyperspaces of topological groups.  In the following, we turn \( G_{R_\omega} \) into a topological group \( \boldsymbol{G}_{R_\omega} \) by endowing it with the discrete topology, and every subgroup \( H \) of \( G_{R_\omega} \) in the corresponding (discrete) topological subgroup \( \boldsymbol{H} \) of \( \boldsymbol{G}_{R_\omega} \) (in particular, \( \widetilde{\boldsymbol{G}}_T \) is obtained by endowing \( \widetilde{G}_T \) with the discrete topology).

\begin{lemma}\label{Lemma : sec 4}
Let $\mathbb{H}$ be a standard Borel space of Polish groups, and assume that there is a universal group $\boldsymbol{W}\in  \mathbb{H}$.
If $\varphi \colon \boldsymbol{G}_{R_\omega}\to \boldsymbol{W}$ is a (topological) embedding into $\boldsymbol{W}$,
then the map
\[
 X_{Gr} \to \Subg{\boldsymbol{W}}, \qquad T \to \varphi[\widetilde {G}_T]
\]
is Borel.
\end{lemma}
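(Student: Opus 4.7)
The plan is to first verify that the map is well-defined (i.e., that \(\varphi[\widetilde G_T]\) really lands in \(\Subg{\boldsymbol{W}}\), which by definition consists of \emph{closed} subgroups), and then to check Borelness against the standard generators of the Effros Borel structure.

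For well-definedness, fix \(T\in X_{Gr}\). The subgroup \(\widetilde G_T\) inherits the discrete topology from \(\boldsymbol{G}_{R_\omega}\), so \(\widetilde{\boldsymbol{G}}_T\) is a (countable, discrete) Polish group in its own right. The restriction of \(\varphi\) to \(\widetilde{\boldsymbol{G}}_T\) is still a topological embedding into \(\boldsymbol{W}\), so \(\varphi[\widetilde G_T]\) is a subgroup of \(\boldsymbol{W}\) that, in the subspace topology, is Polish. By a classical theorem of descriptive set theory (see e.g.\ \cite[Theorem 9.11]{Kec}), a subgroup of a Polish group is Polish in the subspace topology if and only if it is closed; hence \(\varphi[\widetilde G_T]\in\Subg{\boldsymbol{W}}\), as required.

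For Borelness, I would recall that the Effros Borel structure on \(\Subg{\boldsymbol{W}}\) is generated by the sets \(\set{F\in\Subg{\boldsymbol{W}}}{F\cap U\neq\emptyset}\) as \(U\) ranges over open subsets of \(\boldsymbol{W}\), so it suffices to show that for each such \(U\) the set \(\set{T\in X_{Gr}}{\varphi[\widetilde G_T]\cap U\neq\emptyset}\) is Borel. Since \(\varphi\) is injective,
\[
\varphi[\widetilde G_T]\cap U\neq\emptyset\iff\widetilde G_T\cap\varphi^{-1}[U]\neq\emptyset,
\]
and crucially \(\varphi^{-1}[U]\) is a (countable) subset of the countable set \(G_{R_\omega}\). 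Therefore
\[
\set{T\in X_{Gr}}{\varphi[\widetilde G_T]\cap U\neq\emptyset}=\bigcup_{g\in\varphi^{-1}[U]}\set{T\in X_{Gr}}{g\in\widetilde G_T},
\]
a countable union. The map \(\widetilde G\colon X_{Gr}\to\SG{G_{R_\omega}}\) is Borel (as noted in the excerpt), and for each fixed \(g\in G_{R_\omega}\) the evaluation set \(\set{H\in\SG{G_{R_\omega}}}{g\in H}\) is clopen in \(2^{G_{R_\omega}}\); hence each term of the union is Borel, and the result follows.

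The only nontrivial step is the closure argument for \(\varphi[\widetilde G_T]\): without invoking the fact that Polish subgroups of Polish groups are closed, one would be forced to replace \(\varphi[\widetilde G_T]\) by its closure and then verify that this does not alter the Borel computation, which is possible but noticeably less clean. Everything else is a direct unfolding that reduces the problem to the countability of \(G_{R_\omega}\) and the already-established Borelness of \(\widetilde G\).
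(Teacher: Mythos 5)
Your proof is correct and follows essentially the same route as the paper's: both reduce Borelness to the generators $\set{F}{F\cap U\neq\emptyset}$ of the Effros Borel structure and observe that membership there is equivalent to $\widetilde G_T$ meeting the countable set $\varphi^{-1}[U]$, which is Borel by the Borelness of $\widetilde G$. Your additional well-definedness check (that $\varphi[\widetilde G_T]$ is actually a \emph{closed} subgroup of $\boldsymbol{W}$, via the fact that a subgroup that is Polish in the relative topology is closed) is a point the paper leaves implicit, and is a welcome addition.
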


\begin{proof}
Since \( \widetilde{G} \) is Borel, it is enough to prove that the function \( SG(G_{R_\omega}) \to \Subg{\boldsymbol{W}} \) mapping \( H \) to \( \varphi[\boldsymbol{H}] \) is Borel, i.e.\ that given a nonempty open set $U\subseteq \boldsymbol{W}$, the preimage of $B_U=\set{\boldsymbol{F}\in \Subg{\boldsymbol{W}}}{\boldsymbol{F}\cap U\neq \emptyset}$ is a Borel subset of $\SG{G_{R_\omega}}$. 
This is clear, as for every \( H \in SG(G_{R_\omega}) \) one has \( \varphi[\boldsymbol{H}] \in B_U \) if and only if \( h \in C_U \) for some \( h \in H \), where \( C_U = \set{g \in G_{R_\omega}}{\varphi(g) \in U} \).
\end{proof}

\subsection{Polish groups} \label{sec:polishgroups}

We denote by $X_{PGp}$ the hyperspace of all Polish groups, which may be construed as follows. It is well known that there are Polish groups \( \boldsymbol{W} \) which are universal, i.e.\ such that all Polish groups topologically embed into \( \boldsymbol{W} \). For example, one may let \( \boldsymbol{W} \) be the Polish group $\Homeo{[0,1]^\NN}$ of all homeomorphisms of the Hilbert cube
 (see e.g.~\cite[Theorem 9.18]{Kec}), or the Polish group $\Isom{\mathbb{U}}$ of isometries of the Urysohn space $\mathbb{U}$ (see \cite[Theorem 2.5.2]{Gao}). For the sake
of definiteness, we set \( \boldsymbol{W} = \Homeo{[0,1]^\NN} \) so that we may let \( X_{PGp} \) be the standard Borel space
\[
\Subg{\Homeo{[0,1]^\NN}}.
\]

Given two Polish groups $\boldsymbol H$ and $\boldsymbol H'$, we  write $\boldsymbol H \embeds_{PGp} \boldsymbol H'$ when  $\boldsymbol H$ topologically embeds into $\boldsymbol H'$. In the next theorem we give an alternative proof of the fact that topological embeddability between Polish groups is complete (compare this with~\cite[Corollary 34]{FerLouRos}).

\begin{theorem}\label{Theorem : PGp-complete}
The relation $\embeds_{PGp}$ is a complete $\analytic$ quasi-order.
\end{theorem}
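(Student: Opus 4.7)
The plan is to exhibit a Borel reduction from $\embeds_{Gr}$ to $\embeds_{PGp}$; since $\embeds_{PGp}$ is obviously $\analytic$ and $\embeds_{Gr}$ is complete by Theorem~\ref{Theorem : LouRos}, this will prove completeness. Using universality of $\boldsymbol{W} = \Homeo{[0,1]^\NN}$, fix a topological embedding $\varphi \colon \boldsymbol{G}_{R_\omega} \to \boldsymbol{W}$ (such a $\varphi$ exists because $\boldsymbol{G}_{R_\omega}$, being countable and discrete, is Polish). For every $T \in X_{Gr}$ the subgroup $\varphi[\widetilde{\boldsymbol{G}}_T]$ inherits the discrete topology from $\boldsymbol{W}$, hence is a Polish subgroup of $\boldsymbol{W}$, and thus closed in $\boldsymbol{W}$ by standard descriptive set theory. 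Consequently the map
\[
f \colon X_{Gr} \to X_{PGp} = \Subg{\boldsymbol{W}}, \qquad T \mapsto \varphi[\widetilde{\boldsymbol{G}}_T],
\]
is well-defined, and it is Borel by Lemma~\ref{Lemma : sec 4}.

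Next I would verify that $f$ is a reduction. For the forward implication, suppose $T \embeds_{Gr} S$. By Williams' Theorem~\ref{Theorem : Wil14} there is an algebraic embedding $G_T \hookrightarrow G_S$, which restricts to an algebraic embedding $\widetilde{G}_T \hookrightarrow \widetilde{G}_S$. Since both groups carry the discrete topology, this is automatically a topological embedding $\widetilde{\boldsymbol{G}}_T \embeds_{PGp} \widetilde{\boldsymbol{G}}_S$; composing with $\varphi$ on both sides yields $f(T) \embeds_{PGp} f(S)$.

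For the converse, suppose $f(T) \embeds_{PGp} f(S)$ via some topological embedding $\psi$. Since both $f(T)$ and $f(S)$ carry the discrete topology (inherited through $\varphi$ from the discrete group $\boldsymbol{G}_{R_\omega}$), the map $\psi$ is in particular an algebraic embedding of discrete groups. Conjugating by $\varphi$ gives an algebraic embedding $\widetilde{G}_T \hookrightarrow \widetilde{G}_S$, hence $G_T \embeds_{Gp} G_S$, and Williams' reduction now forces $T \embeds_{Gr} S$.

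The main point that requires care is ensuring that $\varphi[\widetilde{\boldsymbol{G}}_T] \in \Subg{\boldsymbol{W}}$ (i.e.\ that it is \emph{closed} in $\boldsymbol{W}$) and that its subspace topology is discrete, because it is precisely the discreteness of $f(T)$ and $f(S)$ that collapses the notion of topological embedding between them to mere algebraic embedding, closing the loop with Williams' construction. Both facts are immediate from general principles (Polish subgroups of Polish groups are closed; continuous bijections with continuous inverse preserve the discrete topology), so the argument is essentially a topological repackaging of Theorem~\ref{Theorem : Wil14}.
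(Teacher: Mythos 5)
Your proposal is correct and follows essentially the same route as the paper: reduce $\embeds_{Gr}$ to $\embeds_{PGp}$ via $T \mapsto \varphi[\widetilde{\boldsymbol{G}}_T]$ for a fixed topological embedding $\varphi \colon \boldsymbol{G}_{R_\omega} \to \Homeo{[0,1]^\NN}$, invoke Lemma~\ref{Lemma : sec 4} for Borelness, and use discreteness to collapse topological embeddability to algebraic embeddability, closing the loop with Theorem~\ref{Theorem : Wil14}. The only difference is that you make explicit the (correct) well-definedness point that the image is a Polish, hence closed, subgroup of $\Homeo{[0,1]^\NN}$, which the paper leaves implicit.
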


\begin{proof}
By Theorem~\ref{Theorem : LouRos}, it suffices to show that \( {\embeds_{Gr}} \leq_B {\embeds_{PGp}} \).
Since \( \Homeo{[0,1]^\NN} \) is universal,  there is a topological embedding $\varphi \colon \boldsymbol G_{R_\omega}\to\Homeo{[0,1]^\NN}$.
 Consider the map
\begin{equation} \label{eq:f}
f \colon X_{Gr} \to X_{PGp}, \qquad T \mapsto \varphi[\boldsymbol{\widetilde G}_T],
\end{equation}
which is Borel by Lemma~\ref{Lemma : sec 4}.
Since every function between discrete Polish groups is continuous and $\widetilde G_T$
is isomorphic to $G_T$, one has that for every $T,S\in X_{Gr}$
\[
T \embeds_{Gr} S \iff \widetilde G_T \embeds_{Gp} \widetilde G_S \iff \boldsymbol{\widetilde G}_T \embeds_{PGp} \boldsymbol{\widetilde G}_S \iff  f(T) \embeds_{PGp} f(S),
\]
hence $f$ reduces $\embeds_{Gr}$ to $\embeds_{PGp}$.
\end{proof}

\begin{remark}
Notice that our proof of Theorem~\ref{Theorem : PGp-complete} uses non-Abelian groups, while~\cite[Corollary 34]{FerLouRos} further shows that the topological embeddability
between Abelian Polish groups is complete as well.
\end{remark}

\begin{theorem}\label{Theorem : PGp uni}
The relation $\embeds_{PGp}$ is invariantly universal (when paired with the relation of topological isomorphim \( \cong_{PGp} \)).
\end{theorem}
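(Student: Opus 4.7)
The plan is to apply Theorem~\ref{Theorem : CMMR13} to $(\embeds_{PGp}, \cong_{PGp})$, taking as the Borel reduction the map $f \colon \mathbb{G} \to X_{PGp}$ with $f(T) = \varphi[\boldsymbol{\widetilde G}_T]$ from~\eqref{eq:f}.

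Condition~\ref{condition : 1} is exactly the content of the proof of Theorem~\ref{Theorem : PGp-complete}. For condition~\ref{condition : 2}, using that $=_\mathbb{G}$ coincides with $\cong_\mathbb{G}$ on $\mathbb{G}$ by rigidity, it suffices to check $T \cong S \iff f(T) \cong_{PGp} f(S)$: this follows from Remark~\ref{rmk:reductioniso} (applied to $\widetilde G_T$, an algebraic copy of $G_T$), together with the facts that discrete topological groups are topologically isomorphic iff abstractly isomorphic, and $\varphi$ is a topological embedding.

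The bulk of the argument lies in verifying condition~\ref{condition : 3}. The plan is to take $Z \subseteq X_{PGp}$ to be the set of discrete closed subgroups of $\boldsymbol{W} = \Homeo{[0,1]^\NN}$, which is a Borel, $\cong_{PGp}$-invariant subset containing $\mathrm{rng}(f)$ (the discreteness of $\mathbf{G}$ is witnessed by the existence of a basic open neighborhood of the identity meeting $\mathbf{G}$ only in $\{1\}$, a Borel condition on $\Subg{\boldsymbol{W}}$). Then we set $\boldsymbol{H} = S_\infty$ and $Y = X_{Gp}$ with its standard logic action, so that $E_{\boldsymbol{H}}^Y = {\cong_{Gp}}$, and use the Kuratowski--Ryll-Nardzewski selection theorem to pick a Borel bijective enumeration $e \colon Z \to \boldsymbol{W}^\NN$. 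Defining $h \colon Z \to X_{Gp}$ by letting $h(\mathbf{G})$ be the pullback of the operation of $\mathbf{G}$ along $e(\mathbf{G})$ yields a Borel reduction of $\cong_{PGp} \restriction Z$ to $\cong_{Gp}$, since for discrete Polish groups topological isomorphism coincides with abstract isomorphism.

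The hard part will be showing that $T \mapsto \Stab{h(f(T))} = \Aut{h(f(T))}$ is Borel. The plan is to realize $h(f(T))$ as the $\pi_T$-conjugate of $\mathcal{G}_T$ inside $X_{Gp}$, where $\pi_T \in S_\infty$ is the permutation obtained as the composition
\[
\NN \xrightarrow{\phi_T^{-1}} G_T \xrightarrow{\sim} \widetilde G_T \xrightarrow{\varphi} \varphi[\widetilde G_T] \xrightarrow{e(f(T))^{-1}} \NN.
\]
A direct calculation shows that $\pi_T$ is a group isomorphism from $\mathcal{G}_T$ to $h(f(T))$, so that $\Aut{h(f(T))} = \pi_T \Aut{\mathcal{G}_T} \pi_T^{-1}$. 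Each factor of $\pi_T$ will be Borel in $T$ (the encoding $\phi_T$ can be arranged so that $T \mapsto \phi_T^{-1}(n) \in G_{R_\omega}$ is Borel, consistently with the required Borelness of $\mathcal{G}$), and conjugation is a Borel operation on $\Subg{S_\infty}$; combined with the Borelness of $T \mapsto \Aut{\mathcal{G}_T}$ from Corollary~\ref{Corollary : Sigma Borel} (applied with $B = \mathbb{G}$, on which $\sigma$ is trivially Borel by rigidity), this delivers the required Borelness and completes the verification.
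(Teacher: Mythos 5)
Your proposal follows the paper's strategy almost exactly: same reduction $f(T)=\varphi[\boldsymbol{\widetilde G}_T]$, same verification of conditions~\ref{condition : 1} and~\ref{condition : 2}, and for condition~\ref{condition : 3} the same choice of $Z$ (discrete closed subgroups of $\Homeo{[0,1]^\NN}$), of the $S_\infty$-space $Y=X_{Gp}$ with the logic action, and of a ``forgetful'' map $h$ built from Borel selectors. The one place you genuinely diverge is the final Borelness check: the paper simply \emph{redefines} $h$ on $\mathrm{rng}(f\restriction\mathbb{G})$ by setting $h(f(T))=\mathcal{G}_T$ (legitimate since $f\restriction\mathbb{G}$ is Borel and injective, so its range is Borel and its inverse is Borel), after which $\Stab{h(f(T))}=\Aut{\mathcal{G}_T}$ and Corollary~\ref{Corollary : Sigma Borel} applies immediately. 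You instead keep the canonical $h$ and write $\Aut{h(f(T))}=\pi_T\Aut{\mathcal{G}_T}\pi_T^{-1}$ for an explicit $\pi_T\in S_\infty$. This works, but it costs you two additional verifications that the paper's trick avoids: that $T\mapsto\pi_T$ is Borel (which in turn needs the intermediate isomorphisms $G_T\to\widetilde G_T$ and the selector enumeration of $f(T)$ to depend on $T$ in a Borel way --- all true, but worth a line each) and that conjugation $S_\infty\times\Subg{S_\infty}\to\Subg{S_\infty}$ is Borel. You should spell these out rather than assert them.

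There is also one small but genuine flaw: your $Z$ consists of \emph{all} discrete closed subgroups, but $X_{Gp}$ is the space of groups with underlying set $\NN$, so your $h$ (pullback along a \emph{bijective} enumeration $\NN\to\mathbf{G}$) is undefined on the finite members of $Z$. The paper avoids this by building the condition ``all $\psi_k(\boldsymbol{G})$ are pairwise distinct'' into the definition of $Z$, so that $Z$ consists exactly of the \emph{infinite} discrete groups; this set is still Borel, $\cong_{PGp}$-invariant, and contains $\mathrm{rng}(f)$ since each $\widetilde G_T$ is infinite. Add that restriction and your argument goes through.
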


\begin{proof}
It is enough to show that the pair  $(\embeds_{Gp},\cong_{Gp})$ satisfies conditions~\ref{condition : 1}--\ref{condition : 3} of Theorem \ref{Theorem : CMMR13}.
Set $g = f \restriction \mathbb{G}$, where \( f \) is as in~\eqref{eq:f}. 
We already proved that $g$ reduces $\embeds_{\mathbb{G}}$ to $\embeds_{PGp}$ in Theorem \ref{Theorem : PGp-complete}, hence \ref{condition : 1} holds.
To see \ref{condition : 2}, notice that $g$ witnesses that ${=_{\mathbb{G}}}\leq_{B} {\cong_{PGp}}$. In fact, since ${=_\mathbb{G}}\leq_B {\cong_{Gp}}$ (cf. Theorem \ref{Theorem : Gp uni}) and each \( G_T \) is isomorphic to \(\widetilde G_T\), then for every \( T,S\in X_{Gr}\), 
\[
T =_{\mathbb{G}} S \iff \widetilde G_T \cong_{Gp} \widetilde G_S \iff \boldsymbol{\widetilde G}_T \cong_{PGp} \boldsymbol{\widetilde G}_S \iff  g(T) \cong_{PGp} g(S),
\]
where the second equivalence holds because
every function between discrete Polish groups is continuous.

Finally, we prove \ref{condition : 3}. Let  $(\psi_k)_{k\in \NN}$ be a sequence  of Borel selectors for \( X_{PGp} \), i.e.\ each \( \psi_k \) is a function from \( X_{PGp} = \Subg{\Homeo{[0,1]^\NN}} \) to \( \Homeo{[0,1]^\NN} \) such that \( \psi_k(\boldsymbol{G}) \in \boldsymbol{G} \) for every \( \boldsymbol{G} \in \Subg{\Homeo{[0,1]^\NN}}  \), and for every such \( \boldsymbol{G} \) the set \( \set{\psi_k(\boldsymbol{G})}{k \in \NN} \) is dense in \( \boldsymbol{G} \). Recall that we may assume that \( \psi_k(\boldsymbol{G}) \neq \psi_{k'}(\boldsymbol{G}) \) for all \( k \neq k' \) whenever \( \boldsymbol{G} \) is infinite.
Let $\set{U_n}{n\in\mathbb{N}}$ be a countable basis for the topology of $\Homeo{[0,1]^\mathbb{N}}$. Let
\begin{multline} \label{eq:Z}
Z=\set{\boldsymbol{G}\in X_{PGp}}{\forall k \forall k' \, (k \neq k' \to \psi_k(\boldsymbol{G}) \neq \psi_{k'}(\boldsymbol{G})) \\
\wedge \, \exists n \, ( 1_{\boldsymbol{G}} \in U_n \wedge \forall k\, ( \psi_k(\boldsymbol{G}) \in U_n \rightarrow \psi_k(\boldsymbol{G}) = 1_{\boldsymbol{G}}))
},
\end{multline}
where \( 1_{\boldsymbol{G}} \) is the identity of \( \boldsymbol{G} \). Notice that  every \( \boldsymbol G \in Z \) needs to be infinite because all its elements of the form \( \psi_k(\boldsymbol{G}) \) are distinct.
We also claim that if $\boldsymbol{G}\in Z$, then $1_{\boldsymbol{G}}$ is an isolated point.
In fact, if this is not the case then for every \( n \in \NN \) such that \( 1_{\boldsymbol{G}} \in U_n \) there would be $x\neq 1_{\boldsymbol{G}}$ such that $x \in U_n$. Since \( \boldsymbol{G} \) is Hausdorff, one could then pick some open set $V$ with $x\in V$ and $1_{\boldsymbol{G}}\notin V$. Since the point \( x \) witnesses that the open set $V \cap U_n$ is nonempty, there would be some
$\psi_k(\boldsymbol{G}) \in V \cap U_n$, which is necessarily distinct from \(1_{\boldsymbol{G}}$ because $1_{\boldsymbol{G}} \notin V$. But then  
$\psi_k(\boldsymbol{G}) \in V \cap U_n \subseteq U_n$ and $ \psi_k(\boldsymbol{G}) \neq 1_{\boldsymbol{G}}$. Since \( n \) was arbitrary, this contradicts \( \boldsymbol{G} \in Z \).
Since a topological group is discrete if and only if its unity is an isolated point, $G\in Z$ if and only if it is infinite and discrete. Therefore \( Z \)  is \(\cong_{PGp}\)-invariant and the definition given in~\eqref{eq:Z} directly shows that $Z$ is a Borel set.
 
 Let \( h \) be the forgetful map \( Z \to X_{Gp} \) associating to each \( \boldsymbol{G} \in Z \)
 the group \( h(\boldsymbol{G})=(\NN,\star_{\boldsymbol{G}})\) with underlying set \( \NN \) and \( \star_{\boldsymbol{G}} \) defined by setting
 \[
 k\star_{\boldsymbol G}m=n \iff \psi_k(\boldsymbol G )\psi_m(\boldsymbol G ) = \psi_n(\boldsymbol G ).
 \]
Now modify \( h \) by imposing that $h(g(T))=\mathcal{G}_T$ for every $T\in \mathbb{G}$, i.e.\ set
\( h(\boldsymbol H)\coloneqq \mathcal G_{g^{-1}(\boldsymbol H)}\) for every 
\( \boldsymbol H \in \mathrm{rng}(g)\).
Notice that the resulting map, which will be denoted again by \( h \), is still Borel because $g$ is a Borel injective map, whence $\mathrm{rng}(g)$ is a Borel subset of $Z$ and the map \( \mathrm{rng}(g) \to X_{Gp}, \, \boldsymbol{H} \mapsto \mathcal G_{g^{-1}(\boldsymbol H)} \), being the composition of the Borel maps \( g^{-1} \) and \( \mathcal{G} \), is Borel.
 Now consider the logic action of $S_\infty$ on $X_{Gp}$: the stabilizer of $h(g(T))$
with respect to this action is just $\Aut{h(g(T)}$, which equals $\Aut{\mathcal{G}_T}$ by the way we modified \( h \). Therefore the map
$T\mapsto \Aut{h(g(T)}$ is Borel by (the proof of) Theorem~\ref{Theorem : Gp uni} and we are done. 
\end{proof}

\subsection{Separable groups with bounded (bi-invariant) metric} \label{sec:metricgroups}

In this section we study the quasi-order of isometric embeddability between separable complete metric groups (briefly: \emph{Polish metric groups}) with bounded bi-invariant metric. In order to define the 
standard Borel hyperspace of (codings for) such groups we can use the existence
of a sufficiently universal
 object%
\footnote{Actually, the only property that we need is that \({\boldsymbol G}_{R_\omega}\) embeds into it --- see the proof of Theorem~\ref{Theorem : embeds_i complete}.} for this class. Recently Doucha proved the following theorem.

\begin{theorem}[{\cite[Theorem~1.1]{Dou}}]\label{Theorem : Doucha}
For every positive real $K>0$, there is a Polish metric group $\boldsymbol{D}_K$ with bi-invariant metric \(d_{K}\) bounded by $K$, which contains a closed isometric copy of every separable group with a complete bi-invariant metric bounded by $K$.
\end{theorem}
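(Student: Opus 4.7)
The plan is to construct $\boldsymbol{D}_K$ by a Fra\"{\i}ss\'e-style limit construction. Let $\mathcal{K}_K$ be the class of all finitely generated groups equipped with a rational-valued bi-invariant metric bounded by $K$, with morphisms being metric-preserving group monomorphisms. First I would verify that $\mathcal{K}_K$ has the Hereditary Property (trivial: subgroups inherit bi-invariant metrics), the Joint Embedding Property, and the Amalgamation Property. Granted this, standard Fra\"{\i}ss\'e-type theory (in its metric version) produces a countable $\mathcal{K}_K$-universal, $\mathcal{K}_K$-homogeneous object $D_K^{0}$ carrying a rational-valued bi-invariant metric bounded by $K$; its metric completion is then a Polish group $\boldsymbol{D}_K$ whose bi-invariant metric $d_K$ is still bounded by $K$, since taking completions preserves both the group operation (by uniform continuity, using bi-invariance) and the bound.

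The main obstacle, as usual in such constructions, is amalgamation. Given $G_1,G_2\in \mathcal{K}_K$ with a common subgroup $A$ on which the two metrics agree, I would form the amalgamated free product $G_1 *_A G_2$ and equip it with the largest bi-invariant pseudometric $d$ that restricts to the given metrics on $G_1$ and $G_2$. Concretely, for $g\in G_1*_A G_2$ one sets
\[
d(g,1) = \inf \sum_{i} d_{j(i)}(x_i, y_i),
\]
where the infimum is over all ways of writing $g$ as a product of elements of the form $x_i y_i^{-1}$ with $x_i,y_i\in G_{j(i)}$ for some $j(i)\in\{1,2\}$, and then $d(g,h)=d(gh^{-1},1)$. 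The delicate points are: showing this is a genuine metric (not merely a pseudometric), which uses that the metrics on $G_1,G_2$ agree on $A$ together with the normal-form theorem for amalgamated products; checking bi-invariance, which is immediate from the definition; verifying that the restrictions to $G_1$ and $G_2$ recover the original metrics; and, finally, ensuring the result is bounded by $K$, possibly by truncating $d$ via $d'(x,y)=\min(d(x,y),K)$ (one must check that this truncation preserves bi-invariance and the triangle inequality, which it does for bi-invariant metrics). One also has to replace the amalgam by a finitely generated subgroup preserving all relevant pairs, to stay inside $\mathcal{K}_K$.

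Finally, I would check universality of the completion $\boldsymbol{D}_K$. Given a Polish metric group $\boldsymbol{H}$ with complete bi-invariant metric bounded by $K$, pick a countable dense subgroup $H_0\le \boldsymbol{H}$ and write it as the union of a chain $H_0^{(1)}\subseteq H_0^{(2)}\subseteq\dots$ of finitely generated subgroups. Using a standard perturbation argument, approximate each $H_0^{(n)}$ by elements of $\mathcal{K}_K$ (rounding the metric to rationals within $2^{-n}$) and successively embed them into $D_K^0$ using $\mathcal{K}_K$-homogeneity to extend embeddings along the chain. The resulting map $H_0\to D_K^0$ is an isometric group monomorphism that, by completeness of $\boldsymbol{D}_K$, extends to an isometric embedding $\boldsymbol{H}\hookrightarrow \boldsymbol{D}_K$ whose image is closed. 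The hardest technical point throughout is the amalgamation step; once it is in hand, the remainder is a routine back-and-forth/completion argument.
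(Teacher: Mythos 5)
This statement is not proved in the paper at all: it is Doucha's theorem, quoted verbatim from \cite[Theorem 1.1]{Dou} and used as a black box. So there is no internal argument to compare yours with; what follows measures your sketch against what a proof of Doucha's theorem actually requires.

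Your plan has a fatal cardinality problem before the amalgamation issue even arises. The class $\mathcal{K}_K$ of \emph{all} finitely generated groups with a rational-valued bi-invariant metric bounded by $K$ has $2^{\aleph_0}$ isomorphism types (by B.~H.~Neumann there are continuum many finitely generated groups, and each one equipped with the two-valued discrete metric of some fixed rational value $\leq K$ lies in $\mathcal{K}_K$). A countable structure has only countably many finitely generated substructures, so no countable $\mathcal{K}_K$-universal object $D_K^0$ can exist, and classical Fra\"{\i}ss\'e theory does not apply to $\mathcal{K}_K$. Universality of $\boldsymbol{D}_K$ can only be achieved at the level of the completion, via approximate embeddings in the sense of metric Fra\"{\i}ss\'e theory or an ad hoc chain construction; this changes the whole architecture of the argument, and it is also where your ``round the metric to rationals and extend by homogeneity'' step needs real work (a perturbed distance function on a group need not remain a bi-invariant metric, and the successive embeddings only cohere approximately, so one must produce a Cauchy sequence of almost-embeddings rather than an increasing chain of exact ones).

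The amalgamation step, which you correctly identify as the crux, is also not right as written. In your formula the products $x_iy_i^{-1}$ range over all of $G_{j(i)}$, so you are taking the infimum of $\sum_i \ell_{j(i)}(z_i)$ over factorizations $g=\prod_i z_i$ into factor elements; the induced $d(g,h)=d(gh^{-1},1)$ is then right-invariant, but bi-invariance amounts to conjugation-invariance of the norm $g\mapsto d(g,1)$, which does \emph{not} follow from this definition --- it is not ``immediate''. The largest bi-invariant pseudometric extending the factor metrics must instead be defined via factorizations of $g$ into \emph{conjugates} $w_iz_iw_i^{-1}$ of factor elements. Once conjugates are allowed, showing that this infimum does not degenerate and restricts exactly to $d_1$ and $d_2$ on the factors is no longer a soft consequence of the normal form theorem: it is the technical heart of the matter (compare the analysis of Graev-type bi-invariant metrics on free products in Doucha's paper and in Slutsky's work), and amalgamation of bi-invariant metrics over an arbitrary common subgroup is genuinely delicate --- the conjugation relations imported from each factor can force the norm of an element of $G_1$ strictly below $\ell_1$. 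Asserting the Amalgamation Property for $\mathcal{K}_K$ and deferring to ``the normal-form theorem'' leaves the entire content of the theorem unproved.
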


Therefore we can use \(\boldsymbol{D}_K\) as the universal object, and regard
\[ 
X^K_{PMGp} = \Subg{\boldsymbol{D}_K}
 \] 
as the standard Borel space of all Polish metric groups whose metric is bi-invariant and bounded by \( K \).

We say that $\boldsymbol H$ \emph{isometrically embeds} into $\boldsymbol H'$, and write $\boldsymbol H\embeds^K_i\boldsymbol H'$, if  there is an isometric group embedding from
$\boldsymbol H$ into $\boldsymbol H'$.

\begin{theorem}\label{Theorem : embeds_i complete}
For every $K>0$, the relation $\embeds_i^K$ is a complete \( \analytic \) quasi-order.
\end{theorem}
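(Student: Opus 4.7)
The plan is to follow closely the template of Theorem~\ref{Theorem : PGp-complete}, using Doucha's universal object \( \boldsymbol{D}_K \) from Theorem~\ref{Theorem : Doucha} in place of \( \Homeo{[0,1]^\NN} \). By Theorem~\ref{Theorem : LouRos} it suffices to show \( {\embeds_{Gr}} \leq_B {\embeds_i^K} \).

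First I would endow \( \boldsymbol{G}_{R_\omega} \) with the discrete metric of diameter \( K \), that is \( d(a,b) = K \) whenever \( a \neq b \). This metric is bi-invariant, complete (Cauchy sequences are eventually constant) and bounded by \( K \); since \( G_{R_\omega} \) is countable it is also separable, so \( \boldsymbol{G}_{R_\omega} \) belongs to the class described by Theorem~\ref{Theorem : Doucha}. I then fix an isometric group embedding \( \varphi \colon \boldsymbol{G}_{R_\omega} \to \boldsymbol{D}_K \) with closed image, granted by Doucha's theorem. For any subgroup \( H \leq G_{R_\omega} \), the same reasoning shows that \( H \) with the restricted discrete metric is again a separable group with complete bi-invariant metric bounded by \( K \), and its image \( \varphi[H] \) is closed in \( \boldsymbol{D}_K \) (being complete as a metric subspace). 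In particular, for every \( T \in X_{Gr} \) the set \( \varphi[\widetilde{\boldsymbol{G}}_T] \) is a closed subgroup of \( \boldsymbol{D}_K \) and therefore an element of \( X^K_{PMGp} = \Subg{\boldsymbol{D}_K} \).

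Next I would define the candidate reduction \( f \colon X_{Gr} \to X^K_{PMGp} \) by \( f(T) = \varphi[\widetilde{\boldsymbol{G}}_T] \). Borelness of \( f \) follows from Lemma~\ref{Lemma : sec 4}, since any isometric embedding is a fortiori a topological embedding. To see that \( f \) reduces \( \embeds_{Gr} \) to \( \embeds_i^K \), the key observation is that on a group equipped with the discrete metric of diameter \( K \), an injective group homomorphism is automatically an isometric embedding (distinct points land at distinct points, which in both source and target are at distance exactly \( K \)), while conversely any isometric group embedding is in particular an abstract group embedding. Combining this with Williams' Theorem~\ref{Theorem : Wil14} and the fact that \( G_T \cong \widetilde{G}_T \), for every \( T,S \in X_{Gr} \) one obtains the chain
\[
T \embeds_{Gr} S \iff G_T \embeds_{Gp} G_S \iff \widetilde{G}_T \embeds_{Gp} \widetilde{G}_S \iff \widetilde{\boldsymbol{G}}_T \embeds_i^K \widetilde{\boldsymbol{G}}_S \iff f(T) \embeds_i^K f(S),
\]
where the last equivalence uses that \( \varphi \) is an isometric group embedding and hence induces an isometric group isomorphism between \( \widetilde{\boldsymbol{G}}_T \) and \( \varphi[\widetilde{\boldsymbol{G}}_T] \) (respectively \( \widetilde{\boldsymbol{G}}_S \) and \( \varphi[\widetilde{\boldsymbol{G}}_S] \)).

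I do not foresee any substantial obstacle: the whole argument is essentially the metric-space counterpart of Theorem~\ref{Theorem : PGp-complete}. The only slightly delicate point is confirming that, with the discrete metric of constant diameter \( K \), the notions of algebraic embedding and isometric embedding coincide so that Williams' reduction transfers cleanly from \( \embeds_{Gp} \) to \( \embeds_i^K \); all remaining Borelness and universality considerations are imported verbatim from the Polish group case through Lemma~\ref{Lemma : sec 4} and Theorem~\ref{Theorem : Doucha}.
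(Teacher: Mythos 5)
Your proposal is correct and follows essentially the same route as the paper's proof: endow \( \boldsymbol{G}_{R_\omega} \) with the discrete metric of constant value \( K \), embed it isometrically into Doucha's universal group \( \boldsymbol{D}_K \), send \( T \) to \( \varphi[\widetilde{\boldsymbol{G}}_T] \), invoke Lemma~\ref{Lemma : sec 4} for Borelness, and observe that injective homomorphisms between such discretely metrized groups are automatically isometric. The only difference is that you spell out a few routine verifications (completeness of the discrete metric, closedness of the images) that the paper leaves implicit.
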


\begin{proof}
Fix $K>0$. Endow $\boldsymbol G_{R_\omega}$ with the discrete metric with value $K$, 
that is set \( d(x,y) = K \) for all distinct \( x,y \in \boldsymbol{G}_{R_\omega} \).
By Theorem~\ref{Theorem : Doucha} there exists an isometric embedding $\varphi \colon \boldsymbol G_{R_\omega}\to \boldsymbol D_K$.  Let $f \colon X_{Gr}\to\Subg{\boldsymbol D_K}$ be the map sending $T$
to  $\varphi[\widetilde {\boldsymbol G}_T]$: we claim that \( f \) Borel reduces \( \embeds_{Gr} \) to \( \embeds^K_i \), so that the result follows from Theorem~\ref{Theorem : LouRos}.

By Lemma~\ref{Lemma : sec 4} the map $f$ is Borel.
Notice that each $f(T)$ is isomorphic to $G_T$ when viewed as a countable structure, i.e.\ when forgetting the metric and the resulting topology. Since any one-to-one function between groups in the range of $f$ is automatically an isometry (because all such groups are equipped with the discrete metric with constant value $K$), we have that for every \( T,S \in X_{Gr} \)
\[ 
T \embeds_{Gr} S \iff G_T \embeds_{Gp} G_S \iff f(T) \embeds^K_i f(S). \qedhere
 \] 
\end{proof}

\begin{theorem}\label{Theorem : embeds_i invariantly universal}
For every $K>0$, the relation  $\embeds_i^K$ is invariantly universal (when paired with the isometric isomorphism \( \cong_i^K \) on \( X^K_{PMGp} \)).
\end{theorem}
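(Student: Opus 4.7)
The plan is to verify conditions~\ref{condition : 1}--\ref{condition : 3} of Theorem~\ref{Theorem : CMMR13} for the pair $(\embeds_i^K, \cong_i^K)$, following closely the template of the proof of Theorem~\ref{Theorem : PGp uni}. I set $g = f \restriction \mathbb{G}$, where $f \colon X_{Gr} \to \Subg{\boldsymbol{D}_K}$, $T \mapsto \varphi[\widetilde{\boldsymbol{G}}_T]$, is the Borel map introduced in the proof of Theorem~\ref{Theorem : embeds_i complete}. Condition~\ref{condition : 1} then follows immediately from that theorem.

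For condition~\ref{condition : 2}, I would observe that any bijection between two elements of $\mathrm{rng}(g)$ is automatically an isometry, since both domain and codomain carry the discrete metric of constant value $K$ inherited from $\varphi[\boldsymbol{G}_{R_\omega}]$. Combined with Remark~\ref{rmk:reductioniso} and the fact that $=_\mathbb{G}$ coincides with $\cong_\mathbb{G}$ on $\mathbb{G}$, this yields the chain
\[
T =_\mathbb{G} S \iff G_T \cong_{Gp} G_S \iff g(T) \cong_i^K g(S),
\]
as required.

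For condition~\ref{condition : 3}, I would fix a sequence $(\psi_k)_{k \in \NN}$ of Borel selectors for $X^K_{PMGp} = \Subg{\boldsymbol{D}_K}$ and define
\[
Z = \set{\boldsymbol{G} \in X^K_{PMGp}}{\forall k \forall k' \, \bigl(k \neq k' \to \psi_k(\boldsymbol{G}) \neq \psi_{k'}(\boldsymbol{G}) \wedge d_K(\psi_k(\boldsymbol{G}), \psi_{k'}(\boldsymbol{G})) = K\bigr)}.
\]
Using density of the selectors in $\boldsymbol{G}$ together with continuity of $d_K$, one sees that $Z$ is exactly the $\cong_i^K$-invariant Borel class of countably infinite groups carrying the constant-$K$ metric, and it contains $\mathrm{rng}(g)$. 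I would then define the forgetful map $h \colon Z \to X_{Gp}$ by $h(\boldsymbol{G}) \coloneqq (\NN, \star_{\boldsymbol{G}})$, where $k \star_{\boldsymbol{G}} m = n$ iff $\psi_k(\boldsymbol{G}) \cdot \psi_m(\boldsymbol{G}) = \psi_n(\boldsymbol{G})$. Since every abstract group homomorphism between elements of $Z$ is automatically an isometry, $h$ Borel-reduces $\cong_i^K \restriction Z$ to $\cong_{Gp}$, i.e.\ to the orbit equivalence relation of the logic action of $S_\infty$ on $X_{Gp}$. Exactly as in the proof of Theorem~\ref{Theorem : PGp uni}, I would then redefine $h$ on $\mathrm{rng}(g)$ by $h(\boldsymbol{H}) \coloneqq \mathcal{G}_{g^{-1}(\boldsymbol{H})}$, producing a still Borel map satisfying $h(g(T)) = \mathcal{G}_T$, and conclude that the stabilizer map $T \mapsto \Aut{\mathcal{G}_T}$ is Borel by (the proof of) Theorem~\ref{Theorem : Gp uni}.

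The hard part will be the correct setup of $Z$ and $h$: verifying that the metric condition defining $Z$ is genuinely Borel and $\cong_i^K$-invariant (which is cleaner here than in the purely topological setting of Theorem~\ref{Theorem : PGp uni}, since the metric is an explicit parameter), and that on such groups isometric isomorphism collapses to abstract group isomorphism. Once this is in place the argument essentially replays that of Theorem~\ref{Theorem : PGp uni}, with the constant-$K$ metric condition playing the role previously filled by topological discreteness.
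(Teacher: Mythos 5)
Your proposal is correct and follows essentially the same route as the paper's own proof: the same restriction $g = f \restriction \mathbb{G}$, the same chain of equivalences for condition~\ref{condition : 2}, and for condition~\ref{condition : 3} the same Borel $\cong_i^K$-invariant set $Z$ of groups whose selected dense points are pairwise at distance $K$, followed by the same modified forgetful map $h$ with $h(g(T)) = \mathcal{G}_T$. The only cosmetic caveat is that in condition~\ref{condition : 3} it is group \emph{isomorphisms} (not arbitrary homomorphisms) between elements of $Z$ that are automatically isometries, but this is clearly what you intend and does not affect the argument.
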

\begin{proof}
Fix $K>0$. Let $g$ be the restriction to \( \mathbb{G} \) of the map \( f \) defined in the proof of \ref{Theorem : embeds_i complete}. It suffices to show that conditions~\ref{condition : 1}--\ref{condition : 3} 
of Theorem~\ref{Theorem : CMMR13}
are satisfied. 
The fact that $g$ reduces $\embeds_{\mathbb{G}}$ to $\embeds_{PGp}$ is proved in Theorem~\ref{Theorem : PGp-complete}, hence condition~\ref{condition : 1} is fulfilled.
Notice that $g$ also witnesses that $=_{G}$ Borel reduces to $\cong_K^i$ (condition~\ref{condition : 2}). Indeed, for every \( T,S \in X_{Gr} \) 
\[
T=_{\mathbb{G}} S \iff  G_T\cong_{Gp}G_{S} \iff g(T)\cong_K^i g(S),
\]
where the former equivalence follows from the proof of Theorem \ref{Theorem : Gp uni}, while  
the latter equivalence holds because \( g(T) \) is isomorphic to \( G_T \) as a group, and the metric
of $g(T)$ is discrete with the same constant value for every $T\in \mathbb{G}$. 

Finally, we prove that also condition~\ref{condition : 3} holds.
Let $(\psi_i)_{i\in\NN}$ be a sequence of Borel selectors for the Polish subgroups of $\boldsymbol D_K$, so that  for every nonempty \( \boldsymbol{H} \in \Subg{\boldsymbol{D}_K} = X^K_{PMGp} \) the sequence \( ( \psi_i(\boldsymbol{H}) )_{ i \in \NN } \) is an enumeration (without repetitions if \( \boldsymbol{H} \) is infinite) of a dense subset of \( \boldsymbol{H}\).
Set
\[
Z=\set{\boldsymbol H\in X^K_{PMGp}} {d_K(\psi_n(\boldsymbol H),\psi_{m}(\boldsymbol H))=K \text{ for distinct }n,m \in \NN},
\]
where \(d_{K}\) is the metric of \(\boldsymbol{D}_K\).
It is immediate to check that $Z$ is a Borel subset of $ X^K_{PMGp}$.  Notice also that every $\boldsymbol H\in  X^K_{PMGp}$ is infinite, and actually it coincides with \( \set{\psi_i(\boldsymbol H)}{i \in \NN} \) because every point \( \psi_i(\boldsymbol H) \) is isolated in it (by the definition of \( Z \)). It follows that \( Z \) is also \( \cong_i^K \)-invariant.

Arguing as in the last paragraph of the proof of Theorem~\ref{Theorem : PGp uni}, modify the forgetful map $h \colon Z\to X_{Gp}$ so that \( h(g(T))=\mathcal G_T \) for every \( T \in \mathbb{G} \). The resulting map is Borel and 
 reduces \( \cong_{i}^K \) to \( \cong_{Gp} \). Moreover, the stabilizer  of each \( h(g(T)) \) with respect to the logic action is exactly \( \Aut{\mathcal{G}_T} \) by the definition of \( h \), therefore the map \( T \mapsto \Aut{h(g(T))} \) is Borel by the proof of Theorem~\ref{Theorem : Gp uni} and we are done.
\end{proof}

An alternative approach to study the isometric embeddability between Polish metric groups with a bounded bi-invariant metric is to use the setup of continuous logic (see~\cite{BenBerHenUsv}). In this context, each separable metric group $\boldsymbol{G} = (G,d_G)$ would be identified with a code \( \boldsymbol{c}_G = (c^0_G,c^1_G) \in \RR^{\NN^3} \times \RR^{\NN^2} \) by fixing a dense subgroup \( \set{g_i}{i \in \NN} \) of \( \boldsymbol{G} \) and setting for every \( i,j,k\in \NN \) 
\[ 
c^0_G(i,j,k) = d_G(g_i g_j, g_k) \qquad \text{and} \qquad c^1_G(i,j) = d_G(g_i,g_j).
\] 
The set \( \mathscr{G} \) of codes for Polish metric groups turns out to be a \( G_\delta \) subset of \( \mathcal{M}(\L) = \RR^{\NN^3} \times \RR^{\NN^2} \), the space of \( \L \)-structures (in continuous logic) of the 
language \( \L\) 
consisting of a ternary relation symbol (the one corresponding to the graph of the group operation) and a binary relation symbol (the one corresponding to the distance of the group) --- see~ \cite{BenDouNieTsa} for more on this. 
Theorem~\ref{Theorem : embeds_i invariantly universal} can be recasted in this setup as follows: for every \( K > 0 \) and every analytic quasi-order \( R \) there is a Borel set \( B \subseteq \mathscr{G} \) invariant under 
isomorphism (consisting of Polish metric groups with a bi-invariant metric bounded by \( K \)) such that \( R \) is Borel bi-reducible with the embeddability relation on \( B \). By the Lopez-Escobar theorem for continuous logic proved in~\cite{BenDouNieTsa}, we then get the following elegant reformulation of Theorem~\ref{Theorem : embeds_i invariantly universal} (compare it with the second part of Theorem~\ref{Theorem : Gp uni}). 

\begin{theorem} \label{thm:continuouslogic}
Let \( K > 0 \). Then for every analytic quasi-order \( R \) there is an  $\mathcal{L}_{\omega_1\omega}$-sentence \( \upvarphi \) of continuous logic all of whose models are Polish metric groups with bi-invariant metric bounded by \( K \) and such that \( R \) is Borel bi-reducible with the embeddability relation on the models of \( \upvarphi \).
\end{theorem}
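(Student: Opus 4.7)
The plan is to apply Theorem~\ref{Theorem : embeds_i invariantly universal} in its original formulation and then translate the resulting invariant Borel set from the hyperspace coding \(X^K_{PMGp} = \Subg{\boldsymbol{D}_K}\) to the continuous logic coding \(\mathscr{G} \subseteq \mathcal{M}(\L)\), where the Lopez-Escobar theorem of \cite{BenDouNieTsa} can be invoked to produce the desired \(\mathcal{L}_{\omega_1 \omega}\)-sentence.

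Fix \(K > 0\) and an analytic quasi-order \(R\). By Theorem~\ref{Theorem : embeds_i invariantly universal}, there is a \(\cong_i^K\)-invariant Borel set \(B_0 \subseteq X^K_{PMGp}\) such that \(\embeds_i^K \restriction B_0\) is Borel bi-reducible with \(R\). The first main step is to construct a Borel map \(\Phi \colon X^K_{PMGp} \to \mathscr{G}\) which is an isomorphism between the two codings in the sense that, for every \(\boldsymbol{H}, \boldsymbol{H}' \in X^K_{PMGp}\), \(\Phi(\boldsymbol{H})\) codes a Polish metric group isometrically isomorphic to \(\boldsymbol{H}\), and \(\boldsymbol{H} \embeds_i^K \boldsymbol{H}' \iff \Phi(\boldsymbol{H}) \embeds \Phi(\boldsymbol{H}')\) in the continuous logic sense (and similarly for isomorphism). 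This is obtained using a sequence \((\psi_n)_{n \in \NN}\) of Borel selectors for \(\Subg{\boldsymbol{D}_K}\), exactly as in the proofs of Theorems~\ref{Theorem : PGp uni} and~\ref{Theorem : embeds_i invariantly universal}: the code of \(\boldsymbol{H}\) is given by
\[
\Phi(\boldsymbol{H})^0(i,j,k) = d_K(\psi_i(\boldsymbol{H}) \psi_j(\boldsymbol{H}), \psi_k(\boldsymbol{H})), \qquad \Phi(\boldsymbol{H})^1(i,j) = d_K(\psi_i(\boldsymbol{H}), \psi_j(\boldsymbol{H})).
\]

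The second step is to push forward \(B_0\) to \(\mathscr{G}\). Because \(\Phi\) is a Borel reduction of \(\cong_i^K\) to isomorphism of codes in \(\mathscr{G}\), the image \(\Phi[B_0]\) might only be analytic; however, by the general fact used after Theorem~\ref{Theorem : CMMR13} (the separation theorem for analytic invariant sets, \cite[Lemma 5.4.6]{Gao}), one can sandwich the saturation of \(\Phi[B_0]\) inside an isomorphism-invariant Borel set \(B \subseteq \mathscr{G}\) consisting only of codes for Polish metric groups with bi-invariant metric bounded by \(K\), while preserving the property that the embeddability on \(B\) is Borel bi-reducible with \(R\) (one direction uses \(\Phi\), the other uses Doucha's Theorem~\ref{Theorem : Doucha} to embed any element of \(B\) isometrically into \(\boldsymbol{D}_K\) in a Borel way, giving an inverse Borel reduction up to isomorphism).

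Finally, I would invoke the continuous-logic Lopez-Escobar theorem of \cite{BenDouNieTsa}, which states that every isomorphism-invariant Borel subset of \(\mathcal{M}(\L)\) is exactly the set of models of some \(\mathcal{L}_{\omega_1\omega}\)-sentence in continuous logic. Applied to \(B\), this yields a sentence \(\upvarphi\) whose class of models coincides with \(B\); by construction all these models are Polish metric groups with bi-invariant metric bounded by \(K\), and \(R\) is Borel bi-reducible with the embeddability relation on the models of \(\upvarphi\). The main subtlety I expect is the second step: making sure that the translation between the two codings is genuinely Borel and respects both \(\cong_i^K\) and \(\embeds_i^K\) simultaneously, so that invariance and Borelness survive the passage to \(\mathscr{G}\); once this bridge is in place, the continuous-logic Lopez-Escobar theorem does the rest.
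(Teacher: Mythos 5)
Your overall architecture is the one the paper intends: transfer the invariant universality of \(\embeds_i^K\) to the continuous-logic coding \(\mathscr{G}\) and then apply the Lopez-Escobar theorem of \cite{BenDouNieTsa}; the paper itself only gestures at this ``recasting'', so filling it in is the right task, and your coding map \(\Phi\) built from the Borel selectors, as well as the final appeal to Lopez-Escobar, are fine. The gap is in your second step, in the direction \({\embeds \restriction B} \leq_B R\). After saturating \(\Phi[B_0]\) and separating, the invariant Borel set \(B\) contains codes that are merely isomorphic to elements of \(\Phi[B_0]\), and to reduce embeddability on \(B\) back to \(R\) you invoke ``Doucha's Theorem~\ref{Theorem : Doucha} to embed any element of \(B\) isometrically into \(\boldsymbol{D}_K\) in a Borel way''. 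Doucha's theorem is a per-group existence statement; no uniform Borel version (i.e.\ a Borel map \(\mathscr{G} \to \Subg{\boldsymbol{D}_K}\) realizing each code as a subgroup) is stated in the paper or in \cite{Dou}, and you cannot manufacture one by uniformizing the analytic relation ``\(\boldsymbol{c}\) is isomorphic to \(\Phi(\boldsymbol{H})\) for \(\boldsymbol{H}\in B_0\)'' --- Jankov--von Neumann only yields a \(\sigma(\analytic)\)-measurable selector. As written, the backward reduction is unsupported.

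The fix, which is presumably what the authors mean by recasting, avoids this entirely. The set \(B_0\) produced by Theorem~\ref{Theorem : embeds_i invariantly universal} lies inside the invariant set \(Z\) of discrete subgroups of \(\boldsymbol{D}_K\) with constant distance \(K\); correspondingly, work inside the Borel isomorphism-invariant set \(Z_{\mathscr{G}}\subseteq\mathscr{G}\) of codes \(\boldsymbol{c}\) with \(c^1(i,j)=K\) for all \(i\neq j\) (and metric bi-invariant and bounded by \(K\), which are Borel invariant conditions on codes). On \(Z_{\mathscr{G}}\) the forgetful map to \(X_{Gp}\) is Borel directly from the code (read the group operation off \(c^0(i,j,k)=0\)), and isometric embeddability coincides with abstract group embeddability since all metrics are discrete with the same constant. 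One can therefore simply re-run the verification of conditions~\ref{condition : 1}--\ref{condition : 3} of Theorem~\ref{Theorem : CMMR13} for the pair (embeddability, isomorphism) on \(\mathscr{G}\) with \(\Phi\circ g\) in place of \(g\), exactly as in the proofs of Theorems~\ref{Theorem : PGp uni} and~\ref{Theorem : embeds_i invariantly universal}; alternatively, perform your separation step inside \(Z_{\mathscr{G}}\), so that the forgetful map carries \(B\) into the \(\cong_{Gp}\)-saturation of \(\set{\mathcal{G}_T}{T\in\mathbb{G}}\) and the reduction to \(R\) is obtained through the proof of Theorem~\ref{Theorem : Gp uni}. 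With either repair the appeal to the continuous-logic Lopez-Escobar theorem goes through as you describe.
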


\end{document}